\newtheorem{lem}{Lemma} 
\newtheorem{cor}{Corollary} 
\newtheorem{thm}{Theorem}
    \newcommand{\ProofBox}[0]{\hfill \rule{2mm}{2mm}}
    \newenvironment{proof}{\begin{trivlist}\item[] {\sc Proof.}}{%
      \ifhmode\nolinebreak[4]~\ProofBox\else\ProofBox\fi \end{trivlist}}
\def\vec#1{\mbox{\boldmath $#1$}}
\title{Consistent Computation of First- and Second-Order\\
Differential Quantities for Surface Meshes}
\author{
Xiangmin Jiao\thanks{e-mail: jiao@ams.sunysb.edu}\\Dept. of Applied Mathematics \& Statistics\\
       Stony Brook University
\and
Hongyuan Zha\thanks{e-mail: zha@cc.gatech.edu}\\College of Computing\\
       Georgia Institute of Technology}
\keywords{normals, curvatures, principal directions, shape operators,
height function, stabilities}
\begin{document}


\maketitle
\begin{abstract}
Differential quantities, including normals, curvatures, principal
directions, and associated matrices, play a fundamental
role in geometric processing and physics-based modeling. Computing
these differential quantities {\it consistently} on surface meshes is
important and challenging, and some existing methods often produce
inconsistent results and require {\it ad hoc} fixes.  In this paper,
we show that the computation of the gradient and Hessian of a height
function provides the foundation for consistently
computing the differential quantities. We derive simple, {\it
explicit} formulas for the transformations between the first- and
second-order differential quantities (i.e., normal vector and principal
curvature tensor) of a smooth surface and the first- and second-order
derivatives (i.e., gradient and Hessian) of its corresponding height
function. We then investigate a general, flexible numerical framework
to estimate the derivatives of the height function based on local 
polynomial fittings formulated as weighted least squares approximations. 
We also propose an iterative fitting scheme to improve accuracy. 
This framework generalizes polynomial
fitting and addresses some of its accuracy and stability issues, as
demonstrated by our theoretical analysis as well as experimental
results.
\end{abstract}

\begin{CRcatlist}
  \CRcat{I.3.5}{Computer Graphics}{Computational Geometry and Object Modeling}{Algorithms, Design, Experimentation}
\end{CRcatlist}

\keywordlist

\section{Introduction}

\copyrightspace

Computing normals and curvatures is a fundamental problem for many
geometric and numerical computations, including feature detection,
shape retrieval, shape registration or matching, surface fairing,
surface mesh adaptation or remeshing, front tracking and moving
meshes. In recent years, a number of methods have been introduced for
the computation of the differential quantities (see e.g.,
\cite{Taubin95ETC,MW00SNG,MDSB02DDG,CP05EDQ}). 
However, some of the existing methods may produce inconsistent
results.  For example, when estimating the mean curvature using the
cotangent formula and estimating the Gaussian curvature using the
angle deficit \cite{MDSB02DDG}, the principal curvatures obtained from
these mean and Gaussian curvatures are not guaranteed to be real
numbers. Such inconsistencies often require {\it ad hoc} fixes to
avoid crashing of the code, and their effects on the accuracy of the
applications are difficult to analyze.


The ultimate goal of this work is to investigate a mathematically
sound framework that can compute the differential quantities {\em
consistently} (i.e., satisfying the intrinsic constraints) with
provable {\em convergence} on general surface meshes, while being
flexible and easy to implement. This is undoubtly an ambitious
goal. Although we may have not fully achieved the goal, we make 
some contributions toward it. First, using the
singular value decomposition \cite{GV96MC} of the Jacobian matrix, we
derive explicit formulas for the transformations between the first-
and second-order differential quantities of a smooth surface (i.e.,
normal vector and principal curvature tensor) and the first- and second-order
derivatives of its corresponding height function (i.e., gradient and
Hessian). We also give the explicit formulas for the transformations
of the gradient and Hessian under a rotation of the coordinate
system. These transformations can be obtained without forming the
shape operator and the associated computation
of its eigenvalues or eigenvectors.  We then
reduce the problem, in a consistent manner, to the computation of the
gradient and Hessian of a function in two dimensions, which can then
be computed more readily by customizing classical numerical techniques.

Our second contribution is to develop a general and flexible method
for differentiating a height function, which can be viewed as a
generalization of the polynomial fitting of osculating jets
\cite{CP05EDQ}. Our method is based on the Taylor series
expansions of a function and its derivatives, then solved by a
weighted least squares formulation. We also propose an
iterative-fitting method that improves the accuracy of fittings.
Furthermore, we address the numerical stability issues by a systematic
point-selection strategy and a numerical solver with safeguard. We
present experimental results to demonstrate the accuracy and stability
of our approach for fittings up to degree six.

The remainder of this paper is organized as follows.
Section~\ref{sec:related_work} reviews some related work on the
computation of differential quantities of discrete surfaces.
Section~\ref{sec:continuous-surf} analyzes the stability and
consistency of classical formulas for computing differential
quantities for continuous surfaces and establishes the theoretical
foundation for consistent computations using a height function.
Section~\ref{sec:wlsf} presents a general
framework and a unified analysis for computing the gradient and
Hessian of a height function based on polynomial fittings, including
an iterative-fitting scheme. Section~\ref{sec:Experimental-Study}
presents some experimental results to demonstrate the accuracy and
stability of our approach. Section~\ref{sec:Conclusion} concludes the
paper with a discussion on future research directions.

\section{Related Work}
\label{sec:related_work}

Many methods have been proposed for computing or estimating the first- and
second-order differential quantities of a surface.  In recent years,
there have been significant interests in the convergence and
consistency of these methods.  We do not attempt to give a
comprehensive review of these methods but consider only a few of them
that are more relevant to our proposed approach; readers are referred to
\cite{Petitjean02SMR} and \cite{GG06ECT} for comprehensive surveys. 
Among the existing methods, many of them estimate the
different quantities separately of each other. For the estimation of
the normals, a common practice is to estimate vertex normals using a
weighted average of face normals, such as area weighting or angle
weighting. These methods in general are only first-order accurate,
although they are the most efficient.

For curvature estimation, Meyer et al. \shortcite{MDSB02DDG} proposed
some discrete operators for computing normals and curvatures, among
which the more popular one is the cotangent formula for mean
curvatures, which is closely related to the formula for Dirichlet
energy of Pinkall and Polthier \shortcite{PP93CDM}.
Xu  \shortcite{Xu04CDL}
 studied the convergence of a number of methods for estimating
the mean curvatures (and more generally, the Laplace-Beltrami
operators). It was concluded that the cotangent
formula does not converge except for some special cases, as also
noted in \cite{HPW05CMG,Ward05CCF}. 
Langer et al. \shortcite{LBS05ADD,LBS05EAQ} proposed a tangent-weighted formula for
estimating mean-curvature vectors, whose convergence also relies on special
symmetric patterns of a mesh.  Agam and
Tang \shortcite{AT05SFA} proposed a framework to estimate curvatures of
discrete surfaces based on local directional curve sampling.
Cohen-Steiner and Morvan \shortcite{CM03RDT} used normal cycle to analyze convergence of
curvature measures for densely sampled surfaces. 

Vertex-based quadratic or higher-order polynomial fittings can produce
convergent normal and curvature estimations.  Meek and Walton
\shortcite{MW00SNG} studied
the convergence properties of a number of estimators for normals and
curvatures.  It was further generalized to
higher-degree polynomial fittings by Cazals and Pouget \shortcite{CP05EDQ}. These
methods are most closely related to our approach. It is
well-known that these methods may encounter numerical difficulties at
low-valence vertices or special arrangements of vertices \cite{LS86CSF}, 
which we address in this paper. Razdan and
Bae \shortcite{RB05CES} proposed a scheme to estimate curvatures using 
biquadratic B\'ezier patches.  Some methods were also proposed to
improve robustness of curvature estimation under noise. 
For example, Ohtake et al. \shortcite{OBS04RVL} 
estimated curvatures by fitting
the surface implicitly with multi-level meshes. Recently, Yang et al.
\shortcite{YLHP06RPC} proposed to improve robustness
by adapting the neighborhood sizes. These methods in general only
provide curvature estimations that are meaningful in some average
sense but do not necessarily guarantee convergence of pointwise
estimates.

Some of the methods estimate the second-order differential quantities
from the surface normals. Goldfeather and Interrante \shortcite{GI04NCO} proposed a
cubic-order formula by fitting the positions and normals of the
surface simultaneously to estimate curvatures and principal
directions. Theisel et al. \shortcite{TRZS04NBE} proposed a face-based
approach for computing shape operators using linear interpolation of
normals. Rusinkiewicz  \shortcite{Rusink04ECT} proposed a similar face-based
curvature estimator from vertex normals. These
methods rely on good normal estimations for reliable results.  Zorin
and coworkers \cite{Zorin05CBE,GGRZ06CDS} proposed to compute a shape
operator using mid-edge normals, which resembles and ``corrects'' the
formula of \cite{OC93DTP}.  Good results were obtained in
practice but there was no theoretical guarantee of its order of
convergence.

\section{Consistent Formulas of Curvatures for Continuous Surfaces}
\label{sec:continuous-surf}


In this section, we first consider the computation of differential
quantities of continuous surfaces (as opposed to discrete surfaces). This
is a classical subject of differential geometry (see e.g., \cite{doCarmo76DG}), 
but the differential quantities are traditionally expressed in terms
of the first and second fundamental forms, which are geometrically
intrinsic but sometimes difficult to understand when a change of
coordinate system is involved.  Furthermore, as we
will show shortly, it is not straightforward to evaluate the classical
formulas consistently (i.e., in a backward-stable fashion) in the
presence of round-off errors due to the potential loss of symmetry and
orthogonality. Starting from these classical formulas, we derive some
simple formulas by a novel use of the singular value
decomposition of the Jacobian of the height functions.
The formulas are easier to understand and to
evaluate. To the best of our knowledge, some of our formulas 
(including those for the symmetric shape operator and principal
curvature tensor) have not previously appeared in the literature.

\subsection{Classical Formulas for Height Functions}

We first review some well-known concepts and formulas in differential
geometry and geometric modeling. Given a smooth surface defined in the global $xyz$
coordinate system, it can be transformed into a local $uvw$ coordinate
system by translation and rotation. Assume both coordinate frames are
orthonormal right-hand systems. Let the origin of the local frame be
$\left[x_{0},y_{0},z_{0}\right]^T$ (note that  for convenience we 
treat points as column vectors). Let $\hat{\vec{t}}_{1}$ and
$\hat{\vec{t}}_{2}$ be the unit vectors in the global coordinate
system along the positive directions of the $u$ and $v$ axes,
respectively. Then,
$\hat{\vec{m}}=\hat{\vec{t}}_{1}\times\hat{\vec{t}}_{2}$ is the unit
vector along the positive $w$ direction. Let $\vec{Q}$ denote the
orthogonal matrix composed of column vectors $\hat{\vec{t}}_{1}$,
$\hat{\vec{t}}_{2}$, and $\hat{\vec{m}}$, i.e.,
\begin{equation}
\vec{Q}\equiv\left[\left.\begin{array}{c}
\\
\hat{\vec{t}}_{1}\\
\\
\end{array}\right|\begin{array}{c}
\\
\hat{\vec{t}}_{2}\\
\\
\end{array}\left|\begin{array}{c}
\\
\vec{m}\\
\\
\end{array}\right.\right],\end{equation}
where `$\mid$' denotes concatenation. 
Any point $\vec{x}$ on the surface is then transformed to a point
$\left[u,v,f(\vec{u})\right]^T\equiv\vec{Q}^{T}\left(\vec{x}-\vec{x}_{0}\right)$,
where $\vec{u}\equiv(u,v)$.
In general, $f$ is not a one-to-one mapping over the whole surface,
but if the $uv$ plane is close to the tangent plane at a point
$\vec{x}$, then $f$ would be one-to-one in a neighborhood of
$\vec{x}$. We refer to this function
$f(\vec{u}):\mathbb{R}^{2}\rightarrow\mathbb{R}$ (more precisely, from a
subset of $\mathbb{R}^{2}$ into $\mathbb{R}$) as a \emph{height
function} at $x$ in the $uvw$ coordinate frame. In the following, all
the formulas are given in the $uvw$ coordinate frame unless otherwise
stated. 

If the surface is smooth, the height function $f$ defines a smooth
surface composed of points $\vec{p}(\vec{u})=\left[u,v,f(\vec{u})\right]^T\in\mathbb{R}^{3}$.
Let $\nabla f\equiv\left[
\begin{array}{c}f_{u}\\ f_{v}\end{array}\right]$ denote the gradient of $f$
with respect to $\vec{u}$ and $\vec{H}\equiv\left[
\begin{array}{cc}
f_{uu} & f_{uv}\\
f_{vu} & f_{vv}\end{array}\right]$ the Hessian of
$f$, where $f_{uv}=f_{vu}$. 
The Jacobian of $\vec{p}(\vec{u})$ with respect to $\vec{u}$,
denoted by $\vec{J}$, is then \begin{equation}
\vec{J}\equiv\left[
\left.\begin{array}{c}\\
\vec{p}_{u} \\ \\ 
\end{array}\right|
\begin{array}{c}\\ 
\vec{p}_{v} \\ \\ 
\end{array}\right]=
\left[\begin{array}{cc}
1 & 0\\
0 & 1\\
f_{u} & f_{v}\end{array}\right].\label{eq:jacobian}\end{equation}
The vectors $\vec{p}_{u}$ and $\vec{p}_{v}$ form a
basis of the tangent space of the surface at $\vec{p}$, though they
may not be unit vectors and may not be orthogonal to each other. Let 
$d\vec{u}$ denote $\left[\begin{array}{c}du\\ dv\end{array}\right]$. The
\emph{first fundamental form} of the surface is given by the quadratic
form \[
I(d\vec{u})=d\vec{u}^{T}\vec{G}d\vec{u},\mbox{ where }\vec{G}\equiv\vec{J}^{T}\vec{J}=\left[\begin{array}{cc}
1+f_{u}^{2} & f_{u}f_{v}\\
f_{u}f_{v} & 1+f_{v}^{2}\end{array}\right].\]
$\vec{G}$ is called the {\em first fundamental matrix}.
Its determinant is $g\equiv\mbox{det}(\vec{G})=1+f_{u}^{2}+f_{v}^{2}$.
We introduce a symbol $\ell\equiv\sqrt{g}$, which will reoccur many
times throughout this section. Geometrically, 
$\ell=\Vert\vec{p}_{u}\times\vec{p}_{v}\Vert$, so it is the
``area element'' and is equal to the area of the parallelogram 
spanned by $\vec{p}_{u}$ and $\vec{p}_{v}$. 
The unit normal to the surface is then\begin{equation}
\hat{\vec{n}}=\frac{\vec{p}_{u}\times\vec{p}_{v}}{\ell}=\frac{1}{\ell}\left[\begin{array}{c}
-f_{u}\\
-f_{v}\\
1\end{array}\right].\label{eq:normal}\end{equation}
The normal in the global $xyz$ frame is then \begin{equation}
\hat{\vec{n}}_{g}=\frac{1}{\ell}\vec{Q}\left[\begin{array}{c}
-f_{u}\\
-f_{v}\\
1\end{array}\right]=\frac{1}{\ell}(\hat{\vec{m}}-f_{u}\hat{\vec{t}}_{1}-f_{v}\hat{\vec{t}}_{2}).\label{eq:normal_face}\end{equation}
The \emph{second fundamental form} in the basis $\{\vec{p}_{u},\vec{p}_{v}\}$
is given by the quadratic form \begin{equation*}
II(d\vec{u})=d\vec{u}^{T}\vec{B}d\vec{u}\end{equation*}
where 
\begin{equation}\vec{B}=-\left[\begin{array}{cc}
\hat{\vec{n}}_{u}^{T}\vec{p}_{u} & \hat{\vec{n}}_{u}^{T}\vec{p}_{v}\\
\hat{\vec{n}}_{v}^{T}\vec{p}_{u} & \hat{\vec{n}}_{v}^{T}\vec{p}_{v}\end{array}\right]=\left[\begin{array}{cc}
\hat{\vec{n}}^{T}\vec{p}_{uu} & \hat{\vec{n}}^{T}\vec{p}_{uv}\\
\hat{\vec{n}}^{T}\vec{p}_{uv} & \hat{\vec{n}}^{T}\vec{p}_{vv}\end{array}\right].\label{eq:secondff}\end{equation}
$\vec{B}$ is called the {\em second fundamental matrix},
and it measures the change of surface normals. It is easy to verify
that \begin{equation}\vec{B}=\vec{H}/\ell\end{equation} by plugging (\ref{eq:jacobian}) and
(\ref{eq:normal}) into (\ref{eq:secondff}).

In differential geometry, the well-known \emph{Weingarten equations}
read $\left[
\hat{\vec{n}}_{u} \mid \hat{\vec{n}}_{v} \right]=
-\left[\vec{p}_{u} \mid \vec{p}_{v}\right]\vec{W}$,
where $\vec{W}$
is the \emph{Weingarten matrix} (a $2\times2$ matrix a.k.a. the \emph{shape
operator}) at point $\vec{p}$ with basis $\{\vec{p}_{u},\vec{p}_{v}\}$.
By left-multiplying $\vec{J}^{T}$ on both sides, 
we have $\vec{B}=\vec{G}\vec{W}$, and therefore, \begin{equation}
\vec{W}=\vec{G}^{-1}\vec{B}=\frac{1}{\ell}(\vec{J}^{T}\vec{J})^{-1}\vec{H}.\label{eq:shapeop}\end{equation}
The eigenvalues $\kappa_{1}$ and $\kappa_{2}$ of the Weingarten matrix
$\vec{W}$ are the \emph{principal curvatures} at $\vec{p}$. The
\emph{mean curvature} is $\kappa_{H}=(\kappa_{1}+\kappa_{2})/2$
and is equal to half of the trace of $\vec{W}$. The \emph{Gaussian
curvature} at $\vec{p}$ is $\kappa_{G}=\kappa_{1}\kappa_{2}$ and
is equal to the determinant of $\vec{W}$. Note that 
$\kappa_{H}^{2}\geq\kappa_{G}$, because 
$4(\kappa_{H}^{2}-\kappa_{G})=(\kappa_{1}-\kappa_{2})^{2}$.
Let $\hat{\vec{d}}_{1}$ and $\hat{\vec{d}}_{2}$ denote the eigenvectors of $\vec{W}$.
Then $\hat{\vec{e}}_{1}=\vec{J}\hat{\vec{d}}_{1}/\Vert\vec{J}\hat{\vec{d}}_{1}\Vert$
and $\hat{\vec{e}}_{2}=\vec{J}\hat{\vec{d}}_{2}/\Vert\vec{J}\hat{\vec{d}}_{2}\Vert$
are the \emph{principal directions}.

The principal curvatures and principal directions are often used to
construct the $3\times3$ matrix \begin{equation}
\vec{C}\equiv\kappa_{1}\hat{\vec{e}}_{1}\hat{\vec{e}}_{1}^{T}+\kappa_{2}\hat{\vec{e}}_{2}\hat{\vec{e}}_{2}^{T}.\label{eq:curvtens1}\end{equation}
We refer to $\vec{C}$ as the \emph{principal curvature tensor} in the
local coordinate system. Note that $\vec{C}$ is sometimes simply
called the \emph{curvature tensor}, which is an overloaded term 
(see e.g., \cite{doCarmo92RG}), but we nevertheless 
will use it for conciseness in later discussions of this paper.
Note that $\hat{\vec{n}}$ must be in the null space of $\vec{C}$,
i.e., $\vec{C}\hat{\vec{n}}=\vec{0}$.  The curvature tensor
is particularly useful because it can facilitate coordinate
transformation. In particular, the curvature tensor in the
global coordinate system is $\vec{C}_{g}=\vec{Q}\vec{C}\vec{Q}^{T}$.

\subsection{Consistency of Classical Formulas}

The Weingarten equation is a standard way for defining and computing
the principal curvatures and principal directions in differential
geometry \cite{doCarmo76DG}, so it is important to understand its
stability and consistency. We notice that the singular-value
decomposition (SVD) of the Jacobian matrix $\vec{J}$ is given
by\begin{equation}
\vec{J}=\underbrace{\left[\begin{array}{cc}
c/\ell & -s\\
s/\ell & c\\
\Vert\nabla f\Vert/\ell & 0\end{array}\right]}_{\vec{U}}\underbrace{\left[\begin{array}{cc}
\ell & 0\\
0 & 1\end{array}\right]}_{\vec{\Sigma}}\underbrace{\left[\begin{array}{cc}
c & -s\\
s & c\end{array}\right]^{T}}_{\vec{V}^{T}},
\label{eq:jacob_svd}\end{equation}
with\begin{equation}
c=\cos\theta=\frac{f_{u}}{\Vert\nabla f\Vert},\mbox{ and } s=\sin\theta=\frac{f_{v}}{\Vert\nabla f\Vert},
\label{eq:theta}\end{equation}
where $\theta=\arctan(f_{v}/f_{u})$. 
In the special case of $f_{u}=f_{v}=0$, we have $\ell=1$, and any
$\theta$ leads to a valid SVD of $\vec{J}$. To resolve this
singularity, we define $\theta=0$ when $f_{u}=f_{v}=0$.

In (\ref{eq:jacob_svd}), the column vectors of $\vec{U}$ are the
left singular vectors of $\vec{J}$, which form an orthonormal basis
of the tangent space, the diagonal entries of $\vec{\Sigma}$ are
the singular values, and $\vec{V}$ is an orthogonal matrix composed
of the right singular vectors of $\vec{J}$. The condition number
of a matrix in 2-norm is defined as the ratio between its largest 
and the smallest singular values, so we obtain the following.
\begin{lem}
The condition number of $\vec{J}$ in 2-norm is
$\ell$ and the condition number of the first
fundamental matrix $\vec{G}$ is $\ell^{2}$.
\end{lem}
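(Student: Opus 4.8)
The plan is to read the singular values straight off the factorization in (\ref{eq:jacob_svd}) and apply the definition of the 2-norm condition number as the ratio of the largest to the smallest singular value. First I would confirm that (\ref{eq:jacob_svd}) is a genuine SVD, i.e., that $\vec{V}$ is orthogonal and that $\vec{U}$ has orthonormal columns. For $\vec{V}$ this is immediate, since its columns $[c,s]^{T}$ and $[-s,c]^{T}$ satisfy $c^{2}+s^{2}=1$ by (\ref{eq:theta}). For $\vec{U}$, a short check using $\Vert\nabla f\Vert^{2}=f_{u}^{2}+f_{v}^{2}$ together with $\ell^{2}=1+\Vert\nabla f\Vert^{2}$ shows that both columns have unit length and are mutually orthogonal. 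Since $\vec{\Sigma}$ is diagonal with nonnegative entries, this establishes that (\ref{eq:jacob_svd}) is a valid SVD of $\vec{J}$.

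Next I would identify the singular values of $\vec{J}$ as the diagonal entries of $\vec{\Sigma}$, namely $\ell$ and $1$. Because $\ell=\sqrt{1+f_{u}^{2}+f_{v}^{2}}\geq 1$, the largest singular value is $\ell$ and the smallest is $1$. Applying the definition of the condition number then gives that the condition number of $\vec{J}$ equals $\ell/1=\ell$, as claimed.

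Finally, for the first fundamental matrix I would use $\vec{G}=\vec{J}^{T}\vec{J}$ together with the SVD. Substituting (\ref{eq:jacob_svd}) and using $\vec{U}^{T}\vec{U}=\vec{I}$ yields $\vec{G}=\vec{V}\vec{\Sigma}^{2}\vec{V}^{T}$, which is an eigendecomposition of the symmetric positive-definite matrix $\vec{G}$. Its eigenvalues, which coincide with its singular values, are therefore $\ell^{2}$ and $1$, so the condition number of $\vec{G}$ is $\ell^{2}/1=\ell^{2}$.

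The only genuine computation is the orthonormality verification in the first step; once the SVD is confirmed, both condition numbers follow immediately. I expect no real obstacle here, as the explicit factorization in (\ref{eq:jacob_svd}) has already been supplied and the argument reduces to recognizing it as an SVD and squaring the singular values for $\vec{G}$.
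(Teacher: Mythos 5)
Your proposal is correct and follows essentially the same route as the paper: the lemma there is an immediate consequence of the SVD in (\ref{eq:jacob_svd}), whose singular values $\ell$ and $1$ give $\kappa(\vec{J})=\ell$ by the ratio definition, and $\vec{G}=\vec{J}^{T}\vec{J}$ then has eigenvalues $\ell^{2}$ and $1$, giving $\kappa(\vec{G})=\ell^{2}$. Your explicit verification that $\vec{U}$ and $\vec{V}$ have orthonormal columns is a fine (and slightly more careful) supplement to what the paper simply asserts.
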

Note that the condition number of $\vec{G}$ is equal to its 
determinant. This is a
coincidence because one of the singular values of $\vec{J}$ and in
turn $\vec{G}$ happens to be equal to $1$. Suppose $\vec{H}$ has an
absolute error $\delta\vec{H}$ of $O(\epsilon\Vert\vec{H}\Vert_{2})$. From 
the standard perturbation analysis, we know that the relative 
error in $\vec{W}$ computed from (\ref{eq:shapeop}) can be as large 
as $O(\ell^{2}\epsilon)$. The mean
and Gaussian curvatures, if computed from the trace and determinant of
$\vec{W}$, would then have an absolute error of
$O(\ell^{2}\epsilon\Vert\vec{W}\Vert_{2})$. Therefore, the Weingarten
equation must be avoided if $\ell^2\gg 1$, and it is desirable to keep
$\ell$ as close to $1$ as possible. Note that $\vec{W}$ is
nonsymmetric, and more precisely it is nonnormal, i.e.,
$\vec{W}^{T}\vec{W}\neq\vec{W}\vec{W}^{T}$. The eigenvalues of
nonsymmetric matrices are not necessarily real, and the eigenvectors
of a nonnormal matrix are not orthogonal to each other \cite{GV96MC}. 
Therefore, in the presence of round-off or discretization errors, the
principal curvatures computed from the Weingarten matrix are not
guaranteed to be real, and the principal directions are not guaranteed
to be orthogonal, causing potential inconsistencies.

Unlike the Weingarten matrix $\vec{W}$, a curvature tensor is always
symmetric.  The curvature tensors are important concepts and are
widely used nowadays (see e.g., \cite{GG06ECT,TRZS04NBE}). In
principle, the two larger eigenvalues (in terms of magnitude) of a
curvature tensors corresponding to the principal curvatures and their
corresponding eigenvectors give a set of orthogonal principal
directions. As long as the estimated curvatures are real, one can use
(\ref{eq:curvtens1}) to construct a curvature tensor $\vec{C}$, except
that $\vec{C}$ would be polluted by the errors associated with the
nonorthogonal eigenvectors of $\vec{W}$. This pollution
can in turn lead to large errors in both the eigenvalues and
eigenvalues of $\vec{C}$. For example, for a spherical patch
$z=\sqrt{4-(x-0.5)^2-(y-0.5)^2}$ over the interval
$[0,1]\times[0,1]$, we observed a relative error of up to $8.5\%$ in
the eigenvalues of the curvature tensors constructed from a nonsymmetric
Weingarten matrix. Another serious problem associated with curvature
tensors is that if the minimum curvature is close to zero, then the
eigenvectors of a curvature tensor are extremely ill-conditioned.
Therefore, one must avoid computing the principal directions as the
eigenvectors of the curvature tensor if the minimum curvature
is close to 0.

\subsection{Simple, Consistent Formulas}
\label{sub:curvcomp}
To overcome the potential inconsistencies of the classical formulas of
principal curvatures and principal directions in the presence of
large $\ell$ or round-off errors, we derive a different set of
explicit formulas. Our main idea is to enforce orthogonality and
symmetry explicitly and to make the error propagation independent 
of $\ell$ as much as possible, starting from the following 
simple observation.

\begin{lem}
\label{lem:shapop_orth}Let $\vec{U}=\left[\vec{u}_{1}\mid\vec{u}_{2}\right]$ 
be a $3\times2$
matrix whose column vectors form an orthonormal basis of the Jacobian, 
and $\vec{C}$ be the curvature tensor. $\vec{U}^{T}\vec{C}\vec{U}$ is the
shape operator in the basis $\{\vec{u}_{1},\vec{u}_{2}\}$, 
and it is a symmetric matrix with an orthogonal diagonalization
$\vec{X}\vec{\Lambda}\vec{X}^{-1}$, whose eigenvalues (i.e., the
diagonal entries of $\vec{\Lambda}$) are real and whose eigenvectors
are orthonormal (i.e., $\vec{X}^T=\vec{X}^{-1}$). The column vectors of
$\vec{U}\vec{X}$ are the principal directions.
\end{lem}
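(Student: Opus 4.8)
The plan is to reduce everything to the spectral theorem for real symmetric $2\times 2$ matrices, after first establishing that $\vec{U}^T\vec{C}\vec{U}$ is a faithful matrix representation of the shape operator. First I would record the two structural facts about $\vec{C}$ that are already available from (\ref{eq:curvtens1}): it is symmetric, being a sum of the symmetric rank-one terms $\kappa_i\hat{\vec{e}}_i\hat{\vec{e}}_i^T$, and it acts as $\vec{C}\hat{\vec{e}}_i=\kappa_i\hat{\vec{e}}_i$ together with $\vec{C}\hat{\vec{n}}=\vec{0}$. Symmetry of $\vec{C}$ immediately gives symmetry of $\vec{U}^T\vec{C}\vec{U}$, since $(\vec{U}^T\vec{C}\vec{U})^T=\vec{U}^T\vec{C}^T\vec{U}=\vec{U}^T\vec{C}\vec{U}$; the spectral theorem then yields an orthogonal diagonalization $\vec{X}\vec{\Lambda}\vec{X}^{-1}$ with $\vec{X}^T=\vec{X}^{-1}$, real $\vec{\Lambda}$, and orthonormal eigenvectors. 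This disposes of every assertion except the identification with the shape operator and the claim about the principal directions.

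The crux is to show that $\vec{U}^T\vec{C}\vec{U}$ represents the shape operator in $\{\vec{u}_1,\vec{u}_2\}$ and that its eigenpairs are the principal curvatures and directions. Since $\{\hat{\vec{e}}_1,\hat{\vec{e}}_2\}$ and $\{\vec{u}_1,\vec{u}_2\}$ are both orthonormal bases of the same two-dimensional tangent space, there is a $2\times 2$ orthogonal matrix $\vec{R}$ with $[\hat{\vec{e}}_1\mid\hat{\vec{e}}_2]=\vec{U}\vec{R}$. Writing $\vec{C}=[\hat{\vec{e}}_1\mid\hat{\vec{e}}_2]\,\mathrm{diag}(\kappa_1,\kappa_2)\,[\hat{\vec{e}}_1\mid\hat{\vec{e}}_2]^T$ and using $\vec{U}^T\vec{U}=\vec{I}_2$, I would substitute to obtain $\vec{U}^T\vec{C}\vec{U}=\vec{R}\,\mathrm{diag}(\kappa_1,\kappa_2)\,\vec{R}^T$. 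This single computation simultaneously confirms that the eigenvalues of $\vec{U}^T\vec{C}\vec{U}$ are exactly $\kappa_1,\kappa_2$ and that the operator it encodes carries $\hat{\vec{e}}_i$ to $\kappa_i\hat{\vec{e}}_i$, i.e., it is the Weingarten/shape operator restricted to the tangent plane, now expressed in the chosen orthonormal basis.

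For the principal directions I would compare the two diagonalizations $\vec{U}^T\vec{C}\vec{U}=\vec{X}\vec{\Lambda}\vec{X}^T=\vec{R}\,\mathrm{diag}(\kappa_1,\kappa_2)\,\vec{R}^T$; up to a permutation and sign of the columns one may take $\vec{X}=\vec{R}$, so that $\vec{U}\vec{X}=\vec{U}\vec{R}=[\hat{\vec{e}}_1\mid\hat{\vec{e}}_2]$, whose columns are by definition the principal directions. The one point requiring care, and the main obstacle, is the legitimacy of reading off the \emph{geometric} shape operator from the purely algebraic object $\vec{U}^T\vec{C}\vec{U}$: this rests on verifying that $\vec{C}$ is precisely the self-adjoint extension of the shape operator to $\mathbb{R}^3$ that annihilates $\hat{\vec{n}}$, so that conjugating by the isometric embedding $\vec{U}$ of the tangent plane returns the shape operator itself. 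When $\kappa_1=\kappa_2$ the matrix $\vec{R}$ (equivalently $\vec{X}$) is no longer unique, but then $\vec{U}^T\vec{C}\vec{U}=\kappa_1\vec{I}_2$ and every orthonormal basis of the tangent plane is a set of principal directions, so the conclusion persists.
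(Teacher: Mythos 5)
Your proposal is correct, but it takes a genuinely different route from the paper. The paper never invokes the orthonormality of the principal directions as a known fact; instead it derives it. Its proof starts from the Weingarten relation, written as the generalized eigenproblem $\vec{B}\hat{\vec{d}}_{i}=\kappa_{i}\vec{G}\hat{\vec{d}}_{i}$, introduces $\vec{S}=\vec{U}^{T}\vec{J}$ (so that $\vec{G}=\vec{S}^{T}\vec{S}$), and applies the congruence $\vec{S}^{-T}(\cdot)\vec{S}^{-1}$ to obtain the \emph{symmetric} matrix $\vec{S}^{-T}\vec{B}\vec{S}^{-1}$ with eigenpairs $(\kappa_{i},\vec{S}\hat{\vec{d}}_{i})$; the orthogonality of the principal directions then falls out of the spectral theorem applied to this symmetrized shape operator, and the identity $\vec{U}^{T}\vec{C}\vec{U}=\vec{S}^{-T}\vec{B}\vec{S}^{-1}$ is verified by expanding the definition (\ref{eq:curvtens1}). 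You instead take the orthonormality of $\{\hat{\vec{e}}_{1},\hat{\vec{e}}_{2}\}$ as a classical given, write the spectral decomposition $\vec{C}=[\hat{\vec{e}}_{1}\mid\hat{\vec{e}}_{2}]\,\mathrm{diag}(\kappa_{1},\kappa_{2})\,[\hat{\vec{e}}_{1}\mid\hat{\vec{e}}_{2}]^{T}$, and conjugate by the isometry $\vec{U}$ --- never touching $\vec{B}$, $\vec{G}$, or $\vec{W}$. Your argument is shorter and more elementary (pure linear algebra plus one textbook fact), and your explicit treatment of the umbilic case $\kappa_{1}=\kappa_{2}$ is cleaner than the paper's, which glosses over it. What the paper's approach buys is twofold: it is self-contained with respect to exactly the consistency property (orthogonal principal directions, real curvatures) that the paper is trying to establish from first principles, rather than importing it; and its intermediate object $\vec{S}^{-T}\vec{B}\vec{S}^{-1}$ is reused verbatim in Theorem~\ref{thm:shapeop_svd} with $\vec{S}=\vec{\Sigma}\vec{V}^{T}$ to produce the explicit formula (\ref{eq:shapeop_orth}) --- a reuse your argument would not directly support, since it identifies the shape operator only abstractly through its eigenstructure rather than through a formula in terms of $\vec{B}$ and the factors of $\vec{J}$.
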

\begin{proof}
Let $\kappa_{i}$ and $\hat{\vec{d}}_{i}$ be the eigenvalues and eigenvectors
of the Weingarten matrix $\vec{W}$ in (\ref{eq:shapeop}). Therefore,
\begin{equation}\vec{B}\hat{\vec{d}}_{i}=\vec{G}\vec{W}\hat{\vec{d}}_{i}=\kappa_{i}\vec{G}\hat{\vec{d}}_{i}.\label{eq:eigen}\end{equation}
Let $\vec{S}=\vec{U}^{T}\vec{J}$. By left-multiplying $\vec{S}^{-T}$
on both sides of (\ref{eq:eigen}), we have \[
\vec{S}^{-T}\vec{B}\vec{S}^{-1}(\vec{S}\hat{\vec{d}}_{i})=\kappa_{i}(\vec{S}^{-T}\vec{G}\hat{\vec{d}}_{i})=\kappa_{i}(\vec{S}\hat{\vec{d}}_{i}).\]
Therefore, the eigenvalues of $\vec{S}^{-T}\vec{B}\vec{S}^{-1}$
are the principal curvatures $\kappa_{i}$ and its eigenvectors
are the principal directions $\tilde{\vec{d}}_{1}\equiv\vec{S}\hat{\vec{d}}_{1}$
and $\tilde{\vec{d}}_{2}\equiv\vec{S}\hat{\vec{d}}_{2}$ in the basis 
$\{\vec{u}_{1},\vec{u}_{2}\}$, so $\vec{S}^{-T}\vec{B}\vec{S}^{-1}$ 
is the shape operator in this basis.
This shape operator is symmetric, so its eigenvalues
are real and its eigenvectors are orthonormal. By definition of the
curvature tensor, \begin{align*}
\vec{C} & = \kappa_{1}\vec{U}\tilde{\vec{d}}_{1}\tilde{\vec{d}}_{1}^{T}\vec{U}^{T}+\kappa_{2}\vec{U}\tilde{\vec{d}}_{2}\tilde{\vec{d}}_{2}^{T}\vec{U}^{T}\\
 & = \vec{U}(\kappa_{1}\tilde{\vec{d}}_{1}\tilde{\vec{d}}_{1}^{T}+\kappa_{2}\tilde{\vec{d}}_{2}\tilde{\vec{d}}_{2}^{T})\vec{U}^{T}\\
 & = \vec{U}\vec{S}^{-T}\vec{B}\vec{S}^{-1}\vec{U}^{T},\end{align*}
so $\vec{U}^{T}\vec{C}\vec{U}$ is equal to the shape operator $\vec{S}^{-T}\vec{B}\vec{S}^{-1}$. 
\end{proof}
Lemma~\ref{lem:shapop_orth} holds for any orthonormal basis of the
Jacobian. There is an infinite number of such bases. Algorithmically,
such a basis can be obtained from either the QR factorization or SVD
of $\vec{J}$. We choose the latter for its simplicity and elegance,
and it turns out to be particularly revealing.

\begin{thm}
\label{thm:shapeop_svd}Let $\vec{J}=\vec{U}\vec{\Sigma}\vec{V}^{T}$
be the reduced SVD of the Jacobian of a height function $f$ as given
by (\ref{eq:jacob_svd}). Let $\vec{u}_{1}$ and $\vec{u}_{2}$ denote
the column vectors of $\vec{U}$ and let $\vec{S}=\vec{\Sigma}\vec{V}^{T}$.
The shape operator in the orthonormal basis $\{\vec{u}_{1},\vec{u}_{2}\}$
is the symmetric matrix \begin{equation}
\tilde{\vec{W}}=\vec{S}^{-T}\vec{B}\vec{S}^{-1}=\frac{1}{\ell}\left[\begin{array}{cc}
c/\ell & s/\ell\\
-s & c\end{array}\right]\vec{H}\left[\begin{array}{cc}
c/\ell & -s\\
s/\ell & c\end{array}\right],\label{eq:shapeop_orth}\end{equation}
where $c$ and $s$ are defined in (\ref{eq:theta}).
\end{thm}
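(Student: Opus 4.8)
The plan is to obtain the first equality directly from Lemma~\ref{lem:shapop_orth} and then to reduce the second equality to an explicit matrix inversion and multiplication. First I would observe that the columns of the $\vec{U}$ produced by the reduced SVD in (\ref{eq:jacob_svd}) are orthonormal (they are left singular vectors) and, because both singular values $\ell$ and $1$ are strictly positive, they span the tangent space. Hence $\vec{U}$ is one of the admissible orthonormal bases to which Lemma~\ref{lem:shapop_orth} applies. The key bookkeeping step is to verify that the matrix $\vec{S}=\vec{\Sigma}\vec{V}^{T}$ defined here coincides with the $\vec{S}=\vec{U}^{T}\vec{J}$ of the lemma: since $\vec{J}=\vec{U}\vec{\Sigma}\vec{V}^{T}$ and $\vec{U}^{T}\vec{U}=\vec{I}$, we indeed have $\vec{U}^{T}\vec{J}=\vec{\Sigma}\vec{V}^{T}=\vec{S}$. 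Lemma~\ref{lem:shapop_orth} then gives both that $\tilde{\vec{W}}=\vec{S}^{-T}\vec{B}\vec{S}^{-1}$ is the shape operator in $\{\vec{u}_{1},\vec{u}_{2}\}$ and that it is symmetric.

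Next I would carry out the explicit computation. Using the block forms $\vec{\Sigma}=\mathrm{diag}(\ell,1)$ and $\vec{V}=\left[\begin{smallmatrix}c & -s\\ s & c\end{smallmatrix}\right]$ from (\ref{eq:jacob_svd}), I form $\vec{S}=\vec{\Sigma}\vec{V}^{T}=\left[\begin{smallmatrix}\ell c & \ell s\\ -s & c\end{smallmatrix}\right]$. Its determinant is $\ell(c^{2}+s^{2})=\ell$, so its inverse is $\vec{S}^{-1}=\left[\begin{smallmatrix}c/\ell & -s\\ s/\ell & c\end{smallmatrix}\right]$, and correspondingly $\vec{S}^{-T}=\left[\begin{smallmatrix}c/\ell & s/\ell\\ -s & c\end{smallmatrix}\right]$. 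Substituting the earlier identity $\vec{B}=\vec{H}/\ell$ then yields $\vec{S}^{-T}\vec{B}\vec{S}^{-1}=\frac{1}{\ell}\vec{S}^{-T}\vec{H}\vec{S}^{-1}$, which is precisely the right-hand side of (\ref{eq:shapeop_orth}).

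There is no deep obstacle here; the content is essentially a specialization of Lemma~\ref{lem:shapop_orth} to the SVD basis together with a $2\times2$ inversion. The only place that warrants care is the identification $\vec{S}=\vec{U}^{T}\vec{J}$ described above, and the sign and normalization conventions used when transposing $\vec{V}$ and reading off $c=\cos\theta$, $s=\sin\theta$ from (\ref{eq:theta}); keeping these consistent is what makes the factored form come out symmetric with the stated entries. The symmetry claim itself need not be re-proved, since it is inherited from Lemma~\ref{lem:shapop_orth} and is in any case transparent because $\vec{B}=\vec{H}/\ell$ is symmetric, so $\vec{S}^{-T}\vec{B}\vec{S}^{-1}$ is symmetric by construction.
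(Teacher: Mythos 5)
Your proposal is correct and follows essentially the same route as the paper: the paper's proof also computes $\vec{S}^{-1}=\vec{V}\vec{\Sigma}^{-1}$ explicitly and then invokes the argument of Lemma~\ref{lem:shapop_orth} to conclude that $\vec{S}^{-T}\vec{B}\vec{S}^{-1}$ is the (symmetric) shape operator in the basis $\{\vec{u}_1,\vec{u}_2\}$. Your additional bookkeeping step verifying $\vec{S}=\vec{\Sigma}\vec{V}^{T}=\vec{U}^{T}\vec{J}$ is exactly the detail the paper leaves implicit in the phrase ``following the same argument,'' so the two proofs match in substance.
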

\begin{proof}
Note that \[
\vec{S}^{-1}=\vec{V}\vec{\Sigma}^{-1}=\left[\begin{array}{cc}
c & -s\\
s & c\end{array}\right]\left[\begin{array}{cc}
1/\ell & 0\\
0 & 1\end{array}\right]=\left[\begin{array}{cc}
c/\ell & -s\\
s/\ell & c\end{array}\right].\]
Following the same argument as for Lemma~\ref{lem:shapop_orth},
Eq.~(\ref{eq:shapeop_orth}) holds.
\end{proof}

To the best of our knowledge, the symmetric shape operator
$\tilde{\vec{W}}$ defined in (\ref{eq:shapeop_orth}) has not previously 
appeared in the literature. Because $\tilde{\vec{W}}$ is symmetric,
its eigenvalues (i.e., principal curvatures) are guaranteed to be
real, and its eigenvectors (i.e., principal directions) are guaranteed
to be orthonormal. In addition, because $\tilde{\vec{W}}$ is given
explicitly by (\ref{eq:shapeop_orth}), the relative error in $\vec{H}$
is no longer amplified by a factor of $\ell^2$ (or even $\ell$) in the
computation of $\tilde{\vec{W}}$. Therefore, this symmetric
shape operator provides a consistent way for computing the
principal curvatures or principal directions. Furthermore, using this
result we can now compute the curvature tensor even without forming
the shape operator.

\begin{thm}
\label{thm:The-curvature-tensors}Let the transformation from the local
to global coordinate system be $\vec{x}=\vec{Q}\vec{u}+\vec{x}_{0}$.
The curvature tensors in the local and global coordinate systems are
\begin{align}
\vec{C} & = \vec{J}^{+T}\vec{B}\vec{J}^{+}=\frac{1}{\ell}\vec{J}^{+T}\vec{H}\vec{J}^{+}\label{eq:curvtens_l}\\
\vec{C}_{g} & = \vec{J}_{g}^{+T}\vec{B}\vec{J}_{g}^{+}=\frac{1}{\ell}\vec{J}_{g}^{+T}\vec{H}\vec{J}_{g}^{+},\label{eq:curvtens_g}\end{align}
respectively, where \begin{equation}
\vec{J}^{+}=\frac{1}{\ell^{2}}\left[\begin{array}{ccc}
1+f_{v}^{2} & -f_{u}f_{v} & f_{u}\\
-f_{u}f_{v} & 1+f_{u}^{2} & f_{v}\end{array}\right]\label{eq:jacobi_inv}\end{equation}
is the pseudo-inverse of $\vec{J}$, and $\vec{J}_g^{+}=\vec{J}^{+}\vec{Q}^{T}$
is the pseudo-inverse of $\vec{J}_g=\vec{Q}\vec{J}$. In addition,
$\vec{H}=\ell\vec{J}^{T}\vec{C}\vec{J}=\ell\vec{J}_{g}^{T}\vec{C}_{g}\vec{J}_{g}$. 
\end{thm}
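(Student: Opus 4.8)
The plan is to build directly on the SVD-based identity already established inside the proof of Lemma~\ref{lem:shapop_orth}. For the SVD basis $\vec{U}$ with $\vec{S}=\vec{\Sigma}\vec{V}^{T}$, that proof showed $\vec{C}=\vec{U}\vec{S}^{-T}\vec{B}\vec{S}^{-1}\vec{U}^{T}$. The crucial observation I would make is that the reduced SVD $\vec{J}=\vec{U}\vec{\Sigma}\vec{V}^{T}$ yields the pseudo-inverse $\vec{J}^{+}=\vec{V}\vec{\Sigma}^{-1}\vec{U}^{T}=\vec{S}^{-1}\vec{U}^{T}$, and hence $\vec{J}^{+T}=\vec{U}\vec{S}^{-T}$. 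Substituting these two identities into the Lemma's expression collapses it immediately to $\vec{C}=\vec{J}^{+T}\vec{B}\vec{J}^{+}$, and the alternative form in (\ref{eq:curvtens_l}) follows from $\vec{B}=\vec{H}/\ell$.

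Next I would verify the closed form (\ref{eq:jacobi_inv}) by a direct computation independent of the SVD. Since $\vec{J}$ has full column rank, $\vec{J}^{+}=(\vec{J}^{T}\vec{J})^{-1}\vec{J}^{T}=\vec{G}^{-1}\vec{J}^{T}$. Inverting the $2\times2$ matrix $\vec{G}$ (using $g=\ell^{2}$) and multiplying by $\vec{J}^{T}$ reproduces (\ref{eq:jacobi_inv}), with the $f_{u}f_{v}^{2}$ terms canceling in the third column.

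For the global formulas I would first establish $\vec{J}_{g}^{+}=\vec{J}^{+}\vec{Q}^{T}$: because $\vec{Q}$ is orthogonal, $\vec{J}_{g}^{T}\vec{J}_{g}=\vec{J}^{T}\vec{Q}^{T}\vec{Q}\vec{J}=\vec{J}^{T}\vec{J}$, so $\vec{J}_{g}^{+}=(\vec{J}^{T}\vec{J})^{-1}\vec{J}^{T}\vec{Q}^{T}=\vec{J}^{+}\vec{Q}^{T}$. Combining this with the transformation law $\vec{C}_{g}=\vec{Q}\vec{C}\vec{Q}^{T}$ stated earlier and the local formula gives $\vec{C}_{g}=\vec{Q}\vec{J}^{+T}\vec{B}\vec{J}^{+}\vec{Q}^{T}=\vec{J}_{g}^{+T}\vec{B}\vec{J}_{g}^{+}$, which is (\ref{eq:curvtens_g}).

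Finally, to invert the relation I would use the defining identity $\vec{J}^{+}\vec{J}=\vec{I}_{2}$, valid for full-column-rank matrices (and likewise $\vec{J}_{g}^{+}\vec{J}_{g}=\vec{I}_{2}$). Left-multiplying $\vec{C}=\frac{1}{\ell}\vec{J}^{+T}\vec{H}\vec{J}^{+}$ by $\vec{J}^{T}$ and right-multiplying by $\vec{J}$, then using $\vec{J}^{T}\vec{J}^{+T}=(\vec{J}^{+}\vec{J})^{T}=\vec{I}_{2}$, yields $\vec{H}=\ell\vec{J}^{T}\vec{C}\vec{J}$, and the identical manipulation with $\vec{J}_{g}$ gives the global counterpart. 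The only point that needs genuine care is keeping the transposes of $\vec{S}^{-1}$ straight in the first step; once $\vec{J}^{+}=\vec{S}^{-1}\vec{U}^{T}$ is recognized, everything else is routine bookkeeping with orthogonality and the pseudo-inverse identities.
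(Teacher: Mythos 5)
Your proposal is correct and follows essentially the same route as the paper: both rest on the SVD-based identity $\vec{C}=\vec{U}\vec{S}^{-T}\vec{B}\vec{S}^{-1}\vec{U}^{T}$ from Lemma~\ref{lem:shapop_orth}, the recognition $\vec{J}^{+}=\vec{V}\vec{\Sigma}^{-1}\vec{U}^{T}$, the sandwiching by $\ell\vec{J}^{T}(\cdot)\vec{J}$ to recover $\vec{H}$, and the conjugation $\vec{C}_{g}=\vec{Q}\vec{C}\vec{Q}^{T}$ for the global form. Your only departures are cosmetic: you derive the explicit formula (\ref{eq:jacobi_inv}) via $(\vec{J}^{T}\vec{J})^{-1}\vec{J}^{T}$, which the paper itself notes as an alternative, and you spell out the verification of $\vec{J}_{g}^{+}=\vec{J}^{+}\vec{Q}^{T}$, which the paper leaves implicit.
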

\begin{proof}
The principal directions are $\hat{\vec{e}}_{1}=\vec{U}\tilde{\vec{d}}_{1}$
and $\hat{\vec{e}}_{2}=\vec{U}\tilde{\vec{d}}_{2}$ in the local coordinate
system, where $\tilde{\vec{d}}_{i}$ are the eigenvectors of $\tilde{\vec{W}}$.
Therefore, \begin{align*}
\vec{C} & = \kappa_{1}\hat{\vec{e}}_{1}\hat{\vec{e}}_{1}^{T}+\kappa_{2}\hat{\vec{e}}_{2}\hat{\vec{e}}_{2}^{T}\\
 & = \vec{U}\tilde{\vec{W}}\vec{U}^{T}\\
 & = (\vec{U}\vec{\Sigma}^{-1}\vec{V}^{T})\vec{B}(\vec{V}\vec{\Sigma}^{-1}\vec{U}^{T})\\
 & = \vec{J}^{+T}\vec{B}\vec{J}^{+}.\end{align*}
By left multiplying $\ell\vec{J}^{T}$
and right multiplying $\vec{J}$ of both sides, we then have \[
\ell\vec{J}^{T}\vec{C}\vec{J}=\ell\vec{J}^{T}\vec{J}^{+T}\vec{B}\vec{J}^{+}\vec{J}=\ell\vec{B}=\vec{H}.\]
The curvature tensor in the global coordinate system is $\vec{C}_{g}=\vec{Q}\vec{C}\vec{Q}^{T}$,
which results in (\ref{eq:curvtens_g}) and $\vec{H}=\ell\vec{J}_{g}^{T}\vec{C}_{g}\vec{J}_{g}$.
To obtain the explicit formula for $\vec{J}^{+}$, simply plug in
$c$, $s$, and $\ell$ into \[
\vec{J}^{+}=\left[\begin{array}{cc}
c & -s\\
s & c\end{array}\right]\left[\begin{array}{cc}
1/\ell & 0\\
0 & 1\end{array}\right]\left[\begin{array}{ccc}
c/\ell & s/\ell & \Vert\nabla f\Vert/\ell\\
-s & c & 0\end{array}\right].\]
Alternatively, we may obtain $\vec{J}^{+}$ by expanding and simplifying
$(\vec{J}^{T}\vec{J})^{-1}\vec{J}^{T}$. 
\end{proof}
Theorem~\ref{thm:The-curvature-tensors} allows us to transform between
the curvature tensor and the Hessian of a height function, both of
which are symmetric matrices. Furthermore, we can easily compute the
gradient and Hessian of one height function from their corresponding
values of another height function in a different coordinate system 
corresponding to the same point of a smooth surface.

\begin{cor}
\label{cor:gradient}Given the gradient $\nabla f=\left[
\begin{array}{c}f_{u}\\ f_{v}\end{array}\right]$ and
Hessian $\vec{H}$ of a height function $f$ in an orthonormal
coordinate system, let $\ell$ denote $\sqrt{1+f_{u}^{2}+f_{v}^{2}}$,
$\hat{\vec{n}}$ the unit normal
$\left[-f_{u},-f_{v},1\right]^{T}/\ell$, $\vec{C}$ the curvature
tensor $\vec{J}^{+T}\vec{H}\vec{J}^{+}/\ell$, where
$\vec{J}=\left[\vec{I}_{2\times2}\mid\nabla f\right]^{T}$.  Let
$\hat{\vec{Q}}\equiv\left[\hat{\vec{q}}_{1}\mid\hat{\vec{q}}_{2}\mid\hat{\vec{q}}_{3}\right]$
be the orthogonal matrix whose column vectors form the axes of another
coordinate system, where $\hat{\vec{q}}_{3}^{T}\hat{\vec{n}}>0$.  Let
$\left[\alpha,\beta,\gamma\right]^T$ denote
$\hat{\vec{Q}}^{T}\hat{\vec{n}}$, and let $\tilde{f}$ denote
the height function in the latter coordinate frame corresponding 
to the same smooth surface. Then the gradient of $\tilde{f}$ is
$\nabla\tilde{f}=\left[
\begin{array}{c} -\alpha/\gamma\\ -\beta/\gamma\end{array}\right]$, and
the Hessian of $\tilde{f}$ is
$\tilde{\vec{J}}^{T}\vec{C}\tilde{\vec{J}}/\gamma$, where
$\tilde{\vec{J}}=\hat{\vec{Q}}\left[\vec{I}_{2\times2}\mid\nabla\tilde{f}\right]^{T}$.
\end{cor}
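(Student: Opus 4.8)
The plan is to treat both the unit normal and the curvature tensor as frame-independent geometric objects attached to the point, and to recover $\nabla\tilde{f}$ and the Hessian of $\tilde{f}$ by re-expressing these objects in the new frame and matching them against the local formulas~(\ref{eq:normal}) and Theorem~\ref{thm:The-curvature-tensors}. Throughout I use that the columns of $\hat{\vec{Q}}$ are the axes of the new frame written in the frame of $f$, so a vector with coordinates $\vec{a}$ in the $f$-frame has coordinates $\hat{\vec{Q}}^T\vec{a}$ in the new frame.

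First I would handle the gradient. Applying~(\ref{eq:normal}) in the new frame, the unit normal there must equal $[-\tilde{f}_u,-\tilde{f}_v,1]^T/\tilde{\ell}$, where $\tilde{\ell}=\sqrt{1+\tilde{f}_u^2+\tilde{f}_v^2}$. On the other hand, rotating $\hat{\vec{n}}$ into the new frame gives the same normal as $\hat{\vec{Q}}^T\hat{\vec{n}}=[\alpha,\beta,\gamma]^T$. Equating the two expressions and comparing third entries yields $\tilde{\ell}=1/\gamma$; the hypothesis $\gamma=\hat{\vec{q}}_3^T\hat{\vec{n}}>0$ guarantees this is positive, so the new $w$-axis is oriented with the surface normal and $\tilde{f}$ is a genuine single-valued height function near the point. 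Comparing the first two entries then gives $\tilde{f}_u=-\alpha/\gamma$ and $\tilde{f}_v=-\beta/\gamma$, the claimed gradient.

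For the Hessian, the key observation is that the curvature tensor obeys the same orthogonal change-of-basis rule as in the global case $\vec{C}_g=\vec{Q}\vec{C}\vec{Q}^T$: expressed in the new frame it reads $\tilde{\vec{C}}=\hat{\vec{Q}}^T\vec{C}\hat{\vec{Q}}$. I would then apply the final identity of Theorem~\ref{thm:The-curvature-tensors} in the new frame, namely $\tilde{\vec{H}}=\tilde{\ell}\,\tilde{\vec{J}}_{\mathrm{loc}}^T\tilde{\vec{C}}\,\tilde{\vec{J}}_{\mathrm{loc}}$ with $\tilde{\vec{J}}_{\mathrm{loc}}=[\vec{I}_{2\times2}\mid\nabla\tilde{f}]^T$. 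Since $\tilde{\vec{J}}=\hat{\vec{Q}}\tilde{\vec{J}}_{\mathrm{loc}}$ and $\hat{\vec{Q}}$ is orthogonal, $\tilde{\vec{J}}_{\mathrm{loc}}^T\tilde{\vec{C}}\,\tilde{\vec{J}}_{\mathrm{loc}}=\tilde{\vec{J}}^T\hat{\vec{Q}}\hat{\vec{Q}}^T\vec{C}\hat{\vec{Q}}\hat{\vec{Q}}^T\tilde{\vec{J}}=\tilde{\vec{J}}^T\vec{C}\tilde{\vec{J}}$; substituting $\tilde{\ell}=1/\gamma$ gives $\tilde{\vec{H}}=\tilde{\vec{J}}^T\vec{C}\tilde{\vec{J}}/\gamma$, as asserted.

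The computations are almost entirely bookkeeping once these two invariance facts are in place, so the only point that requires care—and what I regard as the main obstacle—is fixing the orientation and the direction of the change of basis: one must verify that it is $\hat{\vec{Q}}^T$ (rather than $\hat{\vec{Q}}$) that carries vectors from the $f$-frame into the new frame, and that the sign condition $\gamma>0$ is precisely what makes $\tilde{\ell}=1/\gamma$ positive and the new height function well-defined. Getting either convention backwards would flip signs in $\nabla\tilde{f}$ or interchange $\hat{\vec{Q}}$ and $\hat{\vec{Q}}^T$ in the Hessian formula.
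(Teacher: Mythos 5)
Your proof is correct and is essentially the argument the paper intends: the paper states this corollary without proof, as a direct consequence of Theorem~\ref{thm:The-curvature-tensors}, and your derivation---matching the rotated normal $\hat{\vec{Q}}^T\hat{\vec{n}}$ against the height-function normal formula~(\ref{eq:normal}) to get $\nabla\tilde{f}$ and $\tilde{\ell}=1/\gamma$, then applying the identity $\vec{H}=\ell\vec{J}_g^T\vec{C}_g\vec{J}_g$ with the $f$-frame playing the role of the global frame---is exactly that intended derivation, with the orientation condition $\gamma>0$ and the direction of the change of basis handled correctly.
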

A direct implication of this corollary is that knowing the
\textit{consistent} surface normal and curvature
tensor of a smooth surface in the global coordinate frame is
equivalent to knowing the \textit{consistent} gradient and
Hessian of its corresponding height function in any local
coordinate frame in which the Jacobian is nondegenerate (i.e., $\ell>0$). 
Here, the consistency refers to the fact that the normal $\hat{\vec{n}}$ is
in the null space of the curvature tensor $\vec{C}$ and that 
both $\vec{C}$ and the Hessian $\vec{H}$ are
symmetric (i.e., $\vec{C}\hat{\vec{n}}=\vec{0}$,
$\vec{C}^{T}=\vec{C}$, and $\vec{H}=\vec{H}^{T}$).

The preceding formulas for the symmetric shape operator and
curvature tensor appear to be new and are particularly useful when
computing the principal curvatures and principal directions. Because
many applications require the mean curvature and Gaussian curvature,
we also give the following simple formulas, which are equivalent
to those in classical differential geometry \cite[p. 163]{doCarmo76DG}
but are given here in a more concise form.

\begin{thm}
\label{thm:curvatures}The mean and Gaussian curvature of the height
function $f(\vec{u}):\mathbb{R}^{2}\rightarrow\mathbb{R}$ are \begin{equation}
\kappa_{H}=\frac{\mbox{tr}(\vec{H})}{2\ell}-\frac{(\nabla f)^{T}\vec{H}(\nabla f)}{2\ell^{3}},\mbox{ and }\kappa_{G}=\frac{\mbox{det}(\vec{H})}{\ell^{4}}.\label{eq:curvatures_stable}\end{equation}

\end{thm}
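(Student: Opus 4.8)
The plan is to start from the characterizations already established in this section: $\kappa_{H}=\tfrac{1}{2}\,\mathrm{tr}(\vec{W})$ and $\kappa_{G}=\det(\vec{W})$, together with the identity $\vec{W}=\tfrac{1}{\ell}\vec{G}^{-1}\vec{H}$ from (\ref{eq:shapeop}) and $\det(\vec{G})=g=\ell^{2}$. Everything then reduces to reading off the trace and determinant of the $2\times2$ product $\tfrac{1}{\ell}\vec{G}^{-1}\vec{H}$, and the only real choice is how to handle $\vec{G}^{-1}$ cleanly.

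For the Gaussian curvature I would invoke multiplicativity of the determinant: $\kappa_{G}=\det(\tfrac{1}{\ell}\vec{G}^{-1}\vec{H})=\det(\vec{G})^{-1}\det(\tfrac{1}{\ell}\vec{H})$. Since $\vec{H}$ is $2\times2$ we have $\det(\tfrac{1}{\ell}\vec{H})=\det(\vec{H})/\ell^{2}$, and $\det(\vec{G})^{-1}=1/\ell^{2}$, which yields $\kappa_{G}=\det(\vec{H})/\ell^{4}$ at once. Equivalently one may use $\vec{B}=\vec{H}/\ell$ and $\kappa_{G}=\det(\vec{G}^{-1}\vec{B})$.

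The mean curvature is the step that rewards a little care. The key observation is the rank-one structure $\vec{G}=\vec{I}_{2\times2}+(\nabla f)(\nabla f)^{T}$, which lets me apply the Sherman--Morrison formula to obtain the clean closed form $\vec{G}^{-1}=\vec{I}_{2\times2}-\tfrac{1}{\ell^{2}}(\nabla f)(\nabla f)^{T}$, the denominator being exactly $1+\Vert\nabla f\Vert^{2}=\ell^{2}$. Then $\mathrm{tr}(\vec{G}^{-1}\vec{H})=\mathrm{tr}(\vec{H})-\tfrac{1}{\ell^{2}}\,\mathrm{tr}\!\big((\nabla f)(\nabla f)^{T}\vec{H}\big)$, and the cyclic property of the trace collapses the second term to the scalar $(\nabla f)^{T}\vec{H}(\nabla f)$. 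Multiplying by $\tfrac{1}{2\ell}$ then reproduces the formula (\ref{eq:curvatures_stable}) for $\kappa_{H}$.

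I expect no genuine obstacle here; the statement is a direct consequence of the earlier formulas plus elementary $2\times2$ algebra, so the ``hard part'' is merely choosing the most transparent route. A brute-force expansion of $\vec{G}^{-1}\vec{H}$ through the adjugate also works, but it obscures why the correction term is the quadratic form $(\nabla f)^{T}\vec{H}(\nabla f)$, whereas recognizing the rank-one update makes both the Sherman--Morrison identity and the cyclic-trace simplification immediate. I would also remark that, since the symmetric shape operator $\tilde{\vec{W}}$ of Theorem~\ref{thm:shapeop_svd} is similar to $\vec{W}$, it shares the same trace and determinant, so the identical formulas follow equally well from $\tilde{\vec{W}}$ if one prefers to work with the symmetric form.
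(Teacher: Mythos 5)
Your proof is correct, but it takes a genuinely different route from the paper's. The paper proves the theorem through the \emph{symmetric} shape operator $\tilde{\vec{W}}$ of Theorem~\ref{thm:shapeop_svd}: it writes $\mbox{tr}(\tilde{\vec{W}})$ in the orthonormal basis $\{\vec{v},\vec{v}^{\bot}\}$ with $\vec{v}=[c,s]^T$ from (\ref{eq:theta}), uses the identity $\vec{v}^{T}\vec{H}\vec{v}+\vec{v}^{\bot T}\vec{H}\vec{v}^{\bot}=\mbox{tr}(\vec{H})$ together with $\sqrt{\ell^{2}-1}\,\vec{v}=\nabla f$ to produce the quadratic-form correction term, and gets $\kappa_{G}$ from $\det(\vec{S}^{-1})^{2}\det(\vec{H}/\ell)$. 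You instead work directly with the nonsymmetric Weingarten matrix $\vec{W}=\frac{1}{\ell}\vec{G}^{-1}\vec{H}$ of (\ref{eq:shapeop}), exploiting the rank-one structure $\vec{G}=\vec{I}_{2\times2}+(\nabla f)(\nabla f)^{T}$ via Sherman--Morrison and the cyclic property of the trace; this is legitimate because the paper has already recorded that $\kappa_{H}=\frac{1}{2}\mbox{tr}(\vec{W})$ and $\kappa_{G}=\det(\vec{W})$, and trace and determinant are insensitive to the nonsymmetry that makes $\vec{W}$ problematic for eigenvector computations. Your route is self-contained (it never needs the SVD of $\vec{J}$ or Theorem~\ref{thm:shapeop_svd}) and makes the origin of the $(\nabla f)^{T}\vec{H}(\nabla f)$ term structurally transparent; the paper's route has the expository advantage of staying entirely inside the symmetric, orthonormal-basis framework it is advocating, so the curvature formulas appear as a corollary of the consistent operator $\tilde{\vec{W}}$ rather than of the classical one. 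Your closing remark that $\tilde{\vec{W}}$ and $\vec{W}$ are similar, hence share trace and determinant, correctly identifies the bridge between the two arguments.
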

\begin{proof}
Let $\vec{v}=\left[c,s\right]^T$ and $\vec{v}^{\bot}=\left[-s,c\right]^T$, where $c$
and $s$ are defined in (\ref{eq:theta}). The trace of $\tilde{\vec{W}}$
is \begin{align*}
\mbox{tr}(\tilde{\vec{W}}) & = \mbox{tr}\left(\frac{1}{\ell}\left[\begin{array}{cc}
c/\ell & s/\ell\\
-s & c\end{array}\right]\vec{H}\left[\begin{array}{cc}
c/\ell & -s\\
s/\ell & c\end{array}\right]\right)\\
 & = \frac{1}{\ell}\left(\frac{\vec{v}^{T}}{\ell}\vec{H}\frac{\vec{v}}{\ell}+\vec{v}^{\bot T}\vec{H}\vec{v}^{\bot}\right)\\
 & = \frac{1}{\ell^{3}}\left(\vec{v}^{T}\vec{H}\vec{v}+\ell^{2}\vec{v}^{\bot T}\vec{H}\vec{v}^{\bot}\right)\\
 & = \frac{1}{\ell^{3}}\left(\ell^{2}\mbox{tr}(\vec{H})-(\ell^{2}-1)\vec{v}^{T}\vec{H}\vec{v}\right),\end{align*}
where the last step uses $\vec{v}^{T}\vec{H}\vec{v}+\vec{v}^{\bot T}\vec{H}\vec{v}^{\bot}=\mbox{tr}(\vec{H})$.
Since $\sqrt{\ell^{2}-1}\vec{v}=\nabla f$, therefore, \[
\kappa_{H}=\frac{1}{2}\mbox{tr}(\tilde{\vec{W}})=\frac{\mbox{tr}(\vec{H})}{2\ell}-\frac{(\nabla f)^{T}\vec{H}(\nabla f)}{2\ell^{3}}.\]
With regard to $\kappa_G$, we have \[
\kappa_{G}=\mbox{det}(\tilde{\vec{W}})=\mbox{det}(\vec{S}^{-1})^{2}\mbox{det}(\vec{H}/\ell)=\frac{\mbox{det}(\vec{H})}{\ell^{4}}.\]
\end{proof}

This completes our description of the explicit formulas for the
differential quantities. Since their derivations, especially for the
principal curvatures, principal directions, and curvature tensor, are
based on the shape operator associated with the left singular-vectors
of the Jacobian, many intermediate terms cancel out due to
symmetry and orthogonality, and the final formulas are remarkably
simple. It is important to note that even if the gradient or Hessian
contain input errors, as long as the Hessian is symmetric and the
Jacobian is nondegenerate, our formulas are consistent in
the following sense: The principal curvatures are real, the principal
directions are orthonormal, and the surface normal is orthogonal to
the column space of $\vec{J}$, the row spaces of $\vec{J}^{+}$, and
the column space of the curvature tensor.

The consistency of our formulas is enabled by a
simple fact: A consistent set of first- and second-order differential
quantities have five degrees of freedom, which is a direct result
of Corollary~\ref{cor:gradient}. In the Weingarten equations,
numerically there are six degrees of freedom (three in $\vec{G}$ and
three in $\vec{B}$), so there is an implicit constraint (which should
have enforced the orthogonality of the principal directions) not
present in the equations. In general, if a system of equations has
more degrees of freedom than the intrinsic dimension of the problem
(i.e., with implicit constraints), its numerical solution would likely lead to
inconsistencies in the presence of round-off errors.  The same
argument may be applied to other methods that assume more than
five independent parameters per point for the first- and second-order
differential quantities. For example, if the differential quantities
are evaluated from a parameterization of a surface where $x$, $y$, and
$z$ coordinates are viewed as independent functions of $u$ and $v$,
one must compute more than five parameters, so their numerical
solutions may not be consistent.  The gradient and the Hessian of the
height function contain exactly five degrees of freedom when the
symmetry of the Hessian is enforced, so we have reduced the problem
consistently into the computation of the gradient and Hessian of the
height function.  We therefore build our method based on the
estimation of the gradient and Hessian of the height functions.

\section{Computing Gradient and Hessian of Height Function}
\label{sec:wlsf}

To apply the formulas for continuous surfaces to discrete surfaces,
we must select a region of interest, define a local $uvw$ coordinate
frame, and approximate the gradient and Hessian of the resulting height
function. We build our method based on a local polynomial fitting.
Polynomial fitting is not a new idea and has been studied intensively
(e.g., \cite{LS86CSF,deBoor92CAM,MW00SNG,CP05EDQ}). However, it is
well-known that polynomial fitting may suffer from numerical instabilities,
which in turn can undermine convergence and lead to large errors.
We propose some techniques to overcome instabilities and to improve
the accuracy of fittings. These techniques are based on classical
concepts in numerical linear algebra but are customized here for
derivative computations. In addition, we propose a new
iterative-fitting scheme and also present a unified error analysis of
our approach.

\subsection{Local Polynomial Fitting}

\label{sub:local_poly}

The local polynomial fitting can be derived from the Taylor series
expansion. Let $f(\vec{u})$ denote a bivariate function, where $\vec{u}=(u,v)$,
and let $c_{jk}$ be a shorthand for $\frac{\partial^{j+k}}{\partial u^{j}\partial v^{k}}f(\vec{0})$.
The Taylor series expansion of $f$ about the origin $\vec{u}_{0}=(0,0)$
is\begin{equation}
f(\vec{u})=\sum_{p=0}^{\infty}\sum_{j,k\geq0}^{j+k=p}c_{jk}\frac{u^{j}v^{k}}{j!k!}.\label{eq:taylor_series_inf}\end{equation}
Given a positive integer $d$, a function $f(\vec{u})$ can be approximated 
to $(d+1)$st order accuracy about the origin $\vec{u}_{0}$ as\begin{equation}
f(\vec{u})=\underbrace{\sum_{p=0}^{d}\sum_{j,k\geq0}^{j+k=p}c_{jk}\frac{u^{j}v^{k}}{j!k!}}_{\mbox{Taylor polynomial}}+\underbrace{O(\Vert\vec{u}\Vert^{d+1})}_{\mbox{remainder}},\label{eq:taylor_series}\end{equation}
assuming $f$ has $d+1$ continuous derivatives. The derivatives of
the Taylor polynomial are the same as $f$ at $\vec{u}_{0}$ up to
degree $d$.

Given a set of points sampling a small patch of a smooth surface,
the method of local polynomial fitting approximates the Taylor polynomial
by estimating $c_{jk}$ from the given points. The degree of the polynomial,
denoted by $d$, is called the \emph{degree of fitting}. We will limit
ourselves to relatively low degree fittings (say $d\le6$), because
high-degree fittings tend to be more oscillatory and less stable. To estimate
$c_{jk}$ at a vertex of a surface mesh, we choose the vertex to be
the origin of a local $uvw$ coordinate frame, where the $w$ component
of the coordinates in this frame would be the height function $f$.
We use an approximate vertex normal (e.g., obtained by averaging the
face normals) as the $w$ direction, so that the condition number
of the Jacobian of $f$ would be close to 1. Plugging in each given
point $\left[u_{i},v_{i},f_{i}\right]^T$ into (\ref{eq:taylor_series}), we obtain
an approximate equation \begin{equation}
\sum_{p=0}^{d}\sum_{j,k\geq0}^{j+k=p}c_{jk}\frac{u_{i}^{j}v_{i}^{k}}{j!k!}\approx f_{i},\label{eq:linearsys}\end{equation}
which has $n\equiv(d+1)(d+2)/2$ unknowns (i.e., $c_{jk}$ for $0\leq j+k\leq d$,
$j\geq0$ and $k\geq0$). As a concrete example, for cubic fitting
the equation is\begin{align}
c_{00}+c_{10}u_{i}+c_{01}v_{i}+c_{20}\frac{u_{i}^{2}}{2}+c_{11}u_{i}v_{i}+\nonumber \\
c_{02}\frac{v_{i}^{2}}{2}+c_{30}\frac{u_{i}^{3}}{6}+c_{21}\frac{u_{i}^{2}v_{i}}{2}+c_{12}\frac{u_{i}v_{i}^{2}}{2}+c_{03}\frac{v_{i}^{3}}{6} & \approx f_i.\label{eq:linearsys_cubic}\end{align}
Let $m$ denote the number of these given points (including the vertex
$\vec{u}_{0}$ itself), we then obtain an $m\times n$ rectangular
linear system. If we want to enforce the fit to pass through the vertex
itself, we can simply set $c_{00}=0$ and remove the equation corresponding
to $\vec{u}_{0}$, leading to an $(m-1)\times(n-1)$ rectangular linear
system. In our experiments, it seems to have little benefit to do
this, so we do not enforce $c_{00}=0$ in the following discussions.
However, our framework can be easily adapted to enforce $c_{00}=0$
if desired.

Suppose we have solved this rectangular linear system and obtained
the approximation of $c_{jk}$. At $\vec{u}_{0}$, the gradient of
$f$ is $\left[\begin{array}{c}c_{10}\\c_{01}\end{array}\right]$ and the Hessian is $\left[\begin{array}{cc}
c_{20} & c_{11}\\
c_{11} & c_{02}\end{array}\right]$. Plugging their approximations into the formulas in Section~\ref{sec:continuous-surf}
would give us the normal and curvature approximations at $\vec{u}_{0}$. This approach
is similar to the fitting methods in \cite{CP05EDQ,MW00SNG}. Note
that the Taylor series expansions of $f_{u}$ and $f_{v}$ about $\vec{u}_{0}$
are\begin{align}
f_{u}(\vec{u}) & \approx\sum_{p=0}^{d-1}\sum_{j,k\geq0}^{j+k=p}c_{(j+1)k}\frac{u^{j}v^{k}}{j!k!},\label{eq:interp_gradu}\\
f_{v}(\vec{u}) & \approx\sum_{p=0}^{d-1}\sum_{j,k\geq0}^{j+k=p}c_{j(k+1)}\frac{u^{j}v^{k}}{j!k!},\label{eq:interp_gradv}\end{align}
where the residual is $O(\Vert\vec{u}\Vert^{d})$. The expansions
for the second derivatives have similar patterns. Plugging in the
approximations of $c_{jk}$ into these series, we can also obtain
the estimations of the gradient, Hessian, and in turn the curvatures
at a point $\vec{u}$ near $\vec{u}_{0}$. The remaining questions
for this approach are the theoretical issue of solving the rectangular
system as well as the practical issues of the selection of points
and robust implementation.

\subsection{Weighted Least Squares Formulation}

Let us denote the rectangular linear system obtained from (\ref{eq:linearsys})
for $i=1,\dots,m$ as \begin{equation}
\vec{V}\vec{c}\approx\vec{f},\label{eq:least_squares}\end{equation}
where $\vec{c}$ is an $n$-vector composed of $c_{jk}$, $\vec{f}$
is an $m$-vector composed of $f_i$, and $\vec{V}$ is
a \emph{generalized Vandermonde matrix}. For now, let us assume that
$m\geq n$ and $\vec{V}$ has full rank (i.e., its column vectors
are linearly independent). Such a system is said to be \emph{over-determined}.
We will address the robust numerical solutions for more general cases
in the next subsection.

The simplest solution to (\ref{eq:least_squares}) is to minimize
the 2-norm of the residual $\vec{V}\vec{c}-\vec{f}$, i.e., $\min_{\vec{c}}\Vert\vec{V}\vec{c}-\vec{f}\Vert_{2}$.
A more general formulation is to minimize a weighted norm (or semi-norm),
i.e., \begin{equation}
\min_{\vec{c}}\Vert\vec{V}\vec{c}-\vec{f}\Vert_{\vec{W}}\equiv\min_{\vec{c}}\Vert\vec{W}(\vec{V}\vec{c}-\vec{f})\Vert_{2},\label{eq:generalized_norm}\end{equation}
where $\vec{W}$ is an $m\times m$ diagonal matrix with nonnegative
entries. We refer to $\vec{W}$ as a \emph{weighting matrix}. 
Such a formulation is called \emph{weighted least squares}
\cite[p. 265]{GV96MC}, and it has a unique solution if $\vec{W}\vec{V}$
has full rank. It is equivalent to a linear least squares problem
\begin{equation}
\vec{A}\vec{c}\approx\vec{b},\mbox{ where }\vec{A}\equiv\vec{W}\vec{V}\mbox{ and }\vec{b}\equiv\vec{W}\vec{f}.\label{eq:weighted_least_squares}\end{equation}
Let $\omega_{i}$
denote the $i$th diagonal entry of $\vec{W}$. Algebraically, $\omega_{i}$
assigns a weight to each row of the linear system. Geometrically,
it assigns a priority to each point (the larger $\omega_{i}$, the
higher the priority). If $\vec{f}$ is in the column space of $\vec{V}$,
then a nonsingular weighting matrix has no effect on the solution
of the linear system. Otherwise, different weighting matrices may
lead to different solutions. Furthermore, by setting $\omega_{i}$
to be zero or close to zero, the weighting matrix can be used as a
mechanism for filtering out outliers in the given points.

The weighted least squares formulation is a general technique, but
the choice of $\vec{W}$ is problem dependent. For local polynomial
fitting, it is natural to assign lower priorities to points that are
farther away from the origin or whose normals differ substantially from
the $w$ direction of the local coordinate frame. Furthermore, the
choice of $\vec{W}$ may affect the residual and also the condition
number of $\vec{A}$. Let $\hat{\vec{m}}_{i}$ denote an initial approximation
of the unit normal at the $i$th vertex (e.g., obtained by averaging
face normals). Based on the above considerations, we choose the weight
of the $i$th vertex as\begin{equation}
\omega_{i}=\gamma_{i}^{+}\left/\left(\sqrt{\Vert\vec{u}_{i}\Vert^{2}+\epsilon}\right)^{d/2}\right.,\label{eq:weights}\end{equation}
where $\gamma_{i}^{+}\equiv\max(0,\hat{\vec{m}}_{i}^{T}\hat{\vec{m}}_{0})\mbox{ and }\epsilon\equiv\frac{1}{100m}\sum_{i=1}^{m}\Vert\vec{u}_{i}\Vert^{2}$.
In general, this weight is approximately equal to $\Vert\vec{u}_{i}\Vert^{-d/2}$,
where the exponent $d/2$ tries to balance between accuracy and stability.
The factor $\gamma_{i}^{+}$ approaches $1$ for fine meshes at smooth
areas, but it serves as a safeguard against drastically changing normals
for coarse meshes or nonsmooth areas. The term $\epsilon$ prevents
the weights from becoming too large at points that are too close 
to $\vec{u}_{0}$.

The weighting matrix scales the rows of $\vec{V}$. However, if $u_{i}$
or $v_{i}$ are close to zero, the columns of $\vec{V}$ can be poorly
scaled, so that the $i$th row of $\vec{A}$ would be close to $[\omega_{i},0,\dots,0]$.
Such a matrix would have a very large condition number. A general
approach to alleviate this problem is to introduce a \emph{column
scaling matrix} $\vec{S}$. The least squares problem then becomes\begin{equation}
\min_{\vec{d}}\Vert(\vec{A}\vec{S})\vec{d}-\vec{b}\Vert_{2},\mbox{ where }\vec{d}\equiv\vec{S}^{-1}\vec{c}.\label{eq:weighted_lsf}\end{equation}
Here, $\vec{S}$ is a nonsingular $n\times n$ diagonal matrix. Unlike
the weighting matrix $\vec{W}$, the scaling matrix $\vec{S}$ does
not change the solution of $\vec{c}$ under exact arithmetic. However,
it can significantly improve the condition number and in turn improve
the accuracy in the presence of round-off errors. In general, let
$\vec{a}_{j}$ denote the $j$th column vector of $\vec{A}$. We choose
$\vec{S}=\mbox{diag}(1/\Vert\vec{a}_{1}\Vert_{2},\dots,1/\Vert\vec{a}_{n}\Vert_{2})$,
as it approximately minimizes the 2-norm condition number of $\vec{A}\vec{S}$
(see \cite[p. 265]{GV96MC} and \cite{VDS69CNE}).

\subsection{Robust Implementation}

\label{sub:solution}

Our preceding discussions considered only over-determined systems.
However, even if $m\geq n$, the matrix $\vec{V}$ (and in turn $\vec{A}\vec{S}$)
may not have full rank for certain arrangements of points, so the
weighted least squares would be practically under-determined with
an infinite number of solutions. Numerically, even if $\vec{V}$ has
full rank, the condition number of $\vec{A}\vec{S}$ may still be
arbitrarily large, which in turn may lead to arbitrarily large errors
in the estimated derivatives. We address these issues by proposing
a systematic way for selecting the points and a safeguard for 
QR factorization for solving the least squares problem.

\subsubsection{Section of Points}

As pointed out in \cite{LS86CSF}, the condition number of polynomial
fitting can depend on the arrangements of points. The situation may
appear to be hopeless, because we in general have little control (if
any at all) about the locations of the points. However, the problem
can be much alleviated when the number of points $m$ is substantially
larger than the number of unknowns $n$. To the best of our knowledge,
no precise relationship between the condition number
and $m/n$ has been established. However, it suffices here to have an intuitive understanding
from the geometric interpretation of condition numbers. Observe that
an $m\times n$ matrix $\vec{M}$ (with $m\geq n$) is rank deficient
if its row vectors are co-planar (i.e, contained in a hyperplane
of dimension less than $n$), and its condition number is very large
if its row vectors are nearly co-planar (i.e., if they lie
in the proximity of a hyperplane). Suppose the row vectors of $\vec{M}$ are random
with a nearly uniform distribution, then the probability of the vectors
being nearly co-planar decreases exponentially as $m$ increases.
Therefore, having more points than the unknowns can be highly beneficial
in decreasing the probability of ill-conditioning for well-scaled
matrices. In addition, it is well-known that having more points also
improves noise resistance of the fitting, which however is beyond
the scope of this paper.

Based on the above observation, we require $m$ to be larger than $n$,
but a too large $m$ would compromise efficiency and may also undermine
accuracy. As a rule of thumb,
it appears to be ideal for $m$ to be between $1.5n$ and $2n$. For
triangular meshes, we define the following neighborhoods that 
meet this criterion. First, let us define the \emph{1-ring} \emph{neighbor
faces} of a vertex to be the triangles incident on it. We define the
\emph{1-ring neighborhood }of a vertex to be the vertices of its 1-ring\emph{
}neighbor faces,\emph{ }and define the \emph{1.5-ring neighborhood
}as the vertices of all the faces that share an edge with a 1-ring
neighbor face. This definition of 1-ring neighborhood is conventional,
but our definition of 1.5-ring neighborhood appears to be new. Furthermore,
for $k\geq1$, we define the \emph{$(k+1)$-ring neighborhood} of
a vertex to be the vertices of the \emph{$k$-}ring neighborhood along
with their 1-ring neighbors, and define the \emph{$(k+1.5)$-ring
neighborhood} to be the vertices of the \emph{$k$-}ring neighborhood
along with their 1.5-ring neighbors. Figure~\ref{fig:neighbors}
illustrates these neighborhood definitions up to $2.5$ rings.

\begin{figure}
\begin{centering}
\includegraphics[width=0.9\columnwidth]{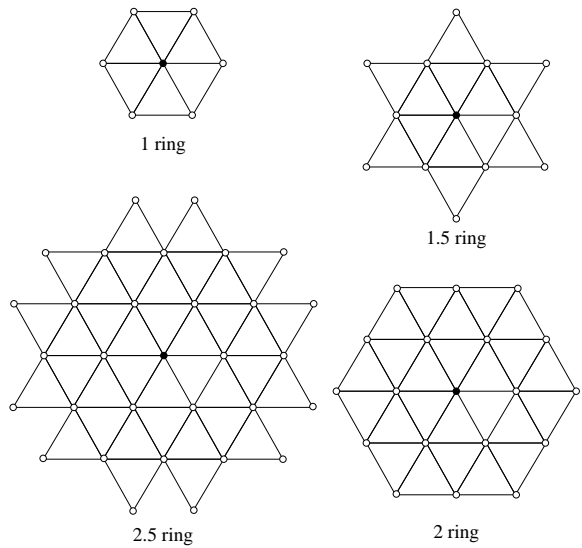}
\par\end{centering}

\caption{\label{fig:neighbors}Schematics of 1-ring, 1.5-ring, 2-ring, and
2.5-ring neighbor vertices of center vertex (black) in each illustration.}

\end{figure}

Observe that a typical $\frac{d+1}{2}$-ring neighborhood has about
the ideal number of points for the $d$th degree fittings at least up 
to degree six, which is evident from Table~\ref{tab:stencils}. Therefore, we
use this as the general guideline for selecting the points. Note that
occasionally (such as for the points near boundary) the 
$\frac{d+1}{2}$-ring neighborhood may not have enough points.
If the number of points is less than $1.5n$, we increase 
the ring level by $0.5$ up to $3.5$. 
Note that we do not attempt to filter out the points that
are far away from the origin at this stage, because such a filtering
is done more systematically through the weighting matrix in (\ref{eq:generalized_norm}).

\begin{table}
\begin{centering}
\caption{\label{tab:stencils}Numbers of coefficients in $d$th degree fittings
versus numbers of points in typical $\frac{d+1}{2}$ rings.}

\par\end{centering}

\begin{centering}
\begin{tabular}{c|c|c|c|c|c|c}
degree ($d$) & 1 & 2 & 3 & 4 & 5 & 6\tabularnewline
\hline
\hline 
\#coeffs. & 3 & 6 & 10 & 15 & 21 & 28\tabularnewline
\hline 
\#points in $\frac{d+1}{2}$ ring & 7 & 13 & 19 & 31 & 37 & 55\tabularnewline
\end{tabular}
\par\end{centering}
\end{table}

\subsubsection{QR Factorization with Safeguard}

Our discussions about the point selection was based on a probability
argument. Therefore, ill-conditioning may still occur especially near
the boundary of a surface mesh. The standard approaches for addressing
ill-conditioned rectangular linear systems include SVD or QR factorization
with column pivoting (see \cite[p. 270]{GV96MC}. When applied to
(\ref{eq:weighted_lsf}), the former approach would seek a solution
that minimizes the 2-norm of the solution vector $\Vert\vec{d}\Vert_{2}$
among all the feasible solutions, and the latter provides an efficient
but less robust approximation to the former. Neither of these approaches
seems appropriate in this context, because they do not give higher
priorities to the lower derivatives, which are the solutions
of interest.

Instead, we propose a safeguard for QR factorization for the local
fittings.  Let the columns of $\vec{V}$ (and in turn of $\vec{A}$) be
sorted in increasing order of the derivatives (i.e., in increasing
order of $j+k$).  Let the reduced QR factorization of $\vec{A}\vec{S}$
be \[\vec{A}\vec{S}=\vec{Q}\vec{R},\]
where $\vec{Q}$ is $m\times n$ with orthonormal column vectors and
$\vec{R}$ is an $n\times n$ upper-triangular matrix. The 2-norm
condition number of $\vec{A}\vec{S}$ is the same as that of $\vec{R}$.
To determine whether $\vec{A}\vec{S}$ is nearly rank deficient, we estimate
the condition number of $\vec{R}$ in 1-norm (instead of 2-norm, 
for better efficiency), which can be done efficiently for triangular
matrices and is readily available
in linear algebra libraries (such as DGECON in LAPACK \cite{ABB99LUG}).
If the condition number of $\vec{R}$ is too large (e.g., $\ge10^{3}$),
we decrease the degree of the fitting by removing the last few columns
of $\vec{A}\vec{S}$ that correspond to the highest derivatives. Note
that QR factorization need not be recomputed after decreasing
the degree of fitting, because it can be obtained by removing the
corresponding columns in $\vec{Q}$ and removing the corresponding
rows and columns in $\vec{R}$. If the condition number is still large,
we would further reduce the degree of fitting until the condition
number is small or the fitting becomes linear. Let $\tilde{\vec{Q}}$
and $\tilde{\vec{R}}$ denote the reduced matrices of $\vec{Q}$ and
$\vec{R}$, and the final solution of $\vec{c}$ is given by \begin{equation}
\vec{c}=\vec{S}\tilde{\vec{R}}^{-1}\tilde{\vec{Q}}^{T}\vec{b},\label{eq:solution}\end{equation}
 where $\tilde{\vec{R}}^{-1}$ denotes a back substitution step. Compared
to SVD or QR with partial pivoting, the above procedure is more accurate
for derivative estimation, as it gives higher priorities to lower
derivatives, and at the same time it is more efficient than SVD.

\subsection{Iterative Fitting of Derivatives}

\label{sec:fitting-derivatives}

The polynomial fitting above uses only the coordinates of the given
points. If accurate normals are known, it can be beneficial to take
advantage of them. If the normals are not given a priori, we may estimate
them first by using the polynomial fitting described earlier. We refer
to this approach as \emph{iterative fitting}.

First, we convert the vertex normals into the gradients of the height
function within the local $uvw$ frame at a point. Let $\hat{\vec{n}}_{i}=\left[\alpha_{i},\beta_{i},\gamma_{i}\right]^T$
denote the unit normal at the $i$th point in the $uvw$ coordinate
system, and then the gradient of the height function is $\left[
\begin{array}{c} -\alpha_i/\gamma_i\\ -\beta_i/\gamma_i\end{array}\right]$.
Let $a_{jk}\equiv c_{(j+1)k}$ and $b_{jk}\equiv c_{j(k+1)}$. By
plugging $u_{i}$, $v_{i}$, and $f_{u}(u_{i},v_{i})\approx-\alpha_{i}/\gamma_{i}$
into (\ref{eq:interp_gradu}), we obtain an equation for the coefficients
$a$s; similarly for $f_{v}$ and $b$s. For example, for cubic fittings
we obtain the equations
\begin{align}
a_{00}+a_{10}u_{i}+a_{01}v_{i}+a_{20}\frac{u_{i}^{2}}{2}+a_{11}u_{i}v_{i}+\nonumber \\
a_{02}\frac{v_{i}^{2}}{2}+a_{30}\frac{u_{i}^{3}}{6}+a_{21}\frac{u_{i}^{2}v_{i}}{2}+a_{12}\frac{u_{i}v_{i}^{2}}{2}+a_{03}\frac{v_{i}^{3}}{6} & \approx-\frac{\alpha_{i}}{\gamma_{i}},\\
b_{00}+b_{10}u_{i}+b_{01}v_{i}+b_{20}\frac{u_{i}^{2}}{2}+b_{11}u_{i}v_{i}+\nonumber \\
b_{02}\frac{v_{i}^{2}}{2}+b_{30}\frac{u_{i}^{3}}{6}+b_{21}\frac{u_{i}^{2}v_{i}}{2}+b_{12}\frac{u_{i}v_{i}^{2}}{2}+b_{03}\frac{v_{i}^{3}}{6} & \approx-\frac{\beta_{i}}{\gamma_{i}}.\end{align}
If we enforce $a_{j,k+1}=b_{j+1,k}$ explicitly, we would obtain a
single linear system for all the $a$s and $b$s with a reduced number
of unknowns. Alternatively, $a_{jk}$ and $b_{jk}$ can be solved
separately using the same coefficient matrix as (\ref{eq:least_squares})
and the same weighted least squares formulation,
but two different right-hand side vectors, and then $a_{j,k+1}$ and
$b_{j+1,k}$ must be averaged. The latter approach compromises the
optimality of the solution without compromising the symmetry and the
order of convergence. We choose the latter approach for simplicity.

After obtaining the coefficients $a$s and $b$s, we then obtain the
Hessian of the height function at $\vec{u}_{0}$ as \[
\vec{H}_{0}=\left[\begin{array}{cc}
a_{10} & (a_{01}+b_{10})/2\\
(a_{01}+b_{10})/2 & b_{01}\end{array}\right].\]
From the gradient and Hessian, the second-order differential quantities
are then obtained using the formulas in Section~\ref{sec:continuous-surf}.
We summarize the overall iterative fitting algorithm as follows: \\
1) Obtain initial estimation of vertex normals by averaging face normals
for the construction of local coordinate systems at vertices;\\
2) For each vertex, determine $\frac{d+1}{2}$-ring neighbor vertices
and upgrade neighborhood if necessary;\\
3) For each vertex, transform its neighbor points into its local coordinate
frame, solve for the coefficients using QR factorization with safeguard,
and convert gradients into vertex normals;\\
4) If iterative fitting is desired, for each vertex, transform normals
of its neighbor vertices to the gradient of the height function in its
local coordinate system to solve for Hessian;\\
5) For each vertex, convert Hessian into symmetric shape operator
to compute curvatures, principal directions, or curvature tensor.

In the algorithm, some steps (such as the collection of neighbor points)
may be merged into the loops in later steps, with a tradeoff
between memory and computation. Note that iterative fitting is optional,
because we found it to be beneficial only for odd-degree fittings,
as discussed in more detail in the next subsection. Without iterative
fitting, the Hessian of the height function should be obtained at
step 3. Finally, note that we can also apply the idea of iterative
fitting to convert the curvature tensor to the Hessian of the height
function and then construct a fitting of the Hessian to obtain
higher derivatives. However, since the focus of this paper is on first-
and second-order differential quantities, we do not pursue this idea
further.

\subsection{Error Analysis}

\label{sub:errorana}

To complete the discussion of our framework, we must address the fundamental
question of whether the computed differential quantities converge
as the mesh gets refined, and if so, what is the convergence rate.
While it has been previously shown that polynomial fittings produce
accurate normal and curvature estimations in noise-free contexts \cite{CP05EDQ,MW00SNG},
our framework is more general and uses different formulas 
for computing curvatures, so it requires a more
general analysis. Let $h$ denote the average edge length of the mesh.
We consider the errors in terms of $h$. Our theoretical results have
two parts, as summarized by the following two theorems.

\begin{thm}
\label{thm:coeffs}Given a set of points in $uvw$ coordinate frame
that interpolate a smooth height function $f$ or approximate $f$
with an error of $O(h^{d+1})$ along the $w$ direction. Assume the
point distribution and the weighing matrix are independent of the
mesh resolution, and the scaled matrix $\vec{A}\vec{S}$ in (\ref{eq:weighted_lsf})
has a bounded condition number. The degree-$d$ weighted least squares
fitting approximates $c_{jk}$ to $O(h^{d-j-k+1})$.
\end{thm}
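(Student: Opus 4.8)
The plan is to reduce the error bound to a statement about the least-squares solution of the scaled, weighted Vandermonde system, and then control that error through two quantities: the truncation error of the Taylor expansion and the bounded condition number assumed in the hypothesis. First I would observe that the exact coefficients $c_{jk}$ (the scaled derivatives $\frac{\partial^{j+k}f}{\partial u^j\partial v^k}(\vec{0})$) satisfy each row of (\ref{eq:linearsys}) up to the remainder $O(\Vert\vec{u}_i\Vert^{d+1})$ from (\ref{eq:taylor_series}), plus the assumed $O(h^{d+1})$ approximation error along $w$. Collecting these residuals into a vector, I would write $\vec{V}\vec{c}^{*}=\vec{f}+\vec{r}$, where $\vec{c}^{*}$ is the vector of true scaled derivatives and $\Vert\vec{r}\Vert$ is controlled by the largest $\Vert\vec{u}_i\Vert^{d+1}=O(h^{d+1})$, since all stencil points lie within $O(h)$ of the origin.

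Next I would pass to the scaled system. Writing $\vec{A}=\vec{W}\vec{V}$, $\vec{b}=\vec{W}\vec{f}$, and $\vec{d}=\vec{S}^{-1}\vec{c}$ as in (\ref{eq:weighted_lsf}), the computed solution $\hat{\vec{d}}$ minimizes $\Vert(\vec{A}\vec{S})\vec{d}-\vec{b}\Vert_2$, while the exact $\vec{d}^{*}=\vec{S}^{-1}\vec{c}^{*}$ satisfies $(\vec{A}\vec{S})\vec{d}^{*}=\vec{b}+\vec{W}\vec{r}$. The error in the scaled unknowns is then bounded by $\Vert\hat{\vec{d}}-\vec{d}^{*}\Vert_2\le\Vert(\vec{A}\vec{S})^{+}\Vert_2\,\Vert\vec{W}\vec{r}\Vert_2$. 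Here the hypothesis that $\vec{A}\vec{S}$ has bounded condition number, combined with the standard normalization of $\vec{A}\vec{S}$ (unit-norm columns from the choice of $\vec{S}$), bounds $\Vert(\vec{A}\vec{S})^{+}\Vert_2$ independently of $h$. The key remaining task is to track how the column-scaling factors $\Vert\vec{a}_j\Vert_2$ in $\vec{S}$ depend on $h$: the column of $\vec{A}$ associated with the monomial $u^j v^k$ scales like $h^{j+k}$ (each entry is a weighted $u_i^j v_i^k/(j!k!)$, and $u_i,v_i=O(h)$), so $S_{jj}=\Theta(h^{-(j+k)})$. Undoing the scaling via $\hat{\vec{c}}=\vec{S}\hat{\vec{d}}$ then converts the uniform bound on the scaled error into the componentwise bound $|\hat{c}_{jk}-c_{jk}^{*}|=O(h^{-(j+k)})\cdot O(h^{d+1})=O(h^{d-j-k+1})$, which is exactly the claimed rate.

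The main obstacle, and the step requiring the most care, is making the $h$-dependence of the column-scaling matrix $\vec{S}$ precise and uniform. I must verify that each scaled column has norm bounded above \emph{and} below by positive constants times $h^{j+k}$, uniformly as $h\to0$; the lower bound is what guarantees $S_{jj}=O(h^{-(j+k)})$ rather than something larger, and it relies on the assumption that the point distribution (the stencil geometry, up to the scale $h$) is independent of mesh resolution. This reduces to showing the normalized point pattern does not degenerate—precisely the content of the bounded-condition-number hypothesis. Once the two-sided bound $S_{jj}=\Theta(h^{-(j+k)})$ is established, the rest is the routine perturbation estimate assembled above, and distributing the scaling factors componentwise yields the stated $O(h^{d-j-k+1})$ accuracy for each $c_{jk}$.
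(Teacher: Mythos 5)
Your proposal is correct and follows essentially the same route as the paper's own proof: treat the Taylor remainder plus data error as an $O(h^{d+1})$ perturbation $\vec{r}$ of the right-hand side, use the bounded condition number of $\vec{A}\vec{S}$ to bound the error in the scaled unknowns $\vec{d}$, and then undo the column scaling, whose factors are $\Theta(h^{-(j+k)})$ because $u_i,v_i=\Theta(h)$, to get the componentwise rate $O(h^{d-j-k+1})$. The only cosmetic differences are sign conventions for $\vec{r}$ and your slightly more explicit justification that unit-norm columns plus bounded conditioning control $\Vert(\vec{A}\vec{S})^{+}\Vert_2$, which the paper states more tersely as the entries of the pseudo-inverse being $\Theta(1)$.
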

\begin{proof}
Let $\vec{c}$ denote the vector composed of $c_{jk}$, the exact
partial derivatives of $f$. Let $\tilde{\vec{b}}$ denote $\vec{A}\vec{c}$.
Let $\vec{r}\equiv\vec{b}-\tilde{\vec{b}}$, which we consider as
a perturbation in the right-hand side of \[
\vec{A}\vec{c}\approx\tilde{\vec{b}}+\vec{r}.\]
Because the Taylor polynomial approximates $f$ to $O(h^{d+1})$,
and $f$ is approximated to $O(h^{d+1})$ by the given points, each
component of $\vec{f}-\vec{V}\vec{c}$ in (\ref{eq:least_squares})
is $O(h^{d+1})$. Since $\vec{r}=\vec{W}(\vec{f}-\vec{V}\vec{c})$
and the entries in $\vec{W}$ are $\Theta(1)$, each component of
$\vec{r}$ is $O(h^{d+1})$.

The error in $c_{jk}$ and in $\vec{r}$ are connected by the scaled
matrix $\vec{A}\vec{S}$. Since the point distribution are independent
of the mesh resolution, $u_{i}=\Theta(h)$ and $v_{i}=\Theta(h)$.
The entries in the column of $\vec{A}$ corresponding to $c_{jk}$
are then $\Theta(h^{j+k})$, so are the $2$-norm of the column. After
column scaling, the entries in $\vec{A}\vec{S}$ are then $\Theta(1)$,
so are the entries in its pseudo-inverse $(\vec{A}\vec{S})^{+}$.
The error of $\vec{d}$ in (\ref{eq:weighted_lsf}) is then $\delta\vec{d}=(\vec{A}\vec{S})^{+}\vec{r}$.
Because each component of $\vec{r}$ is $O(h^{d+1})$, each component
of $\delta\vec{d}$ is $O(\kappa h^{d+1})$, where $\kappa$ is the
condition number of $\vec{A}\vec{S}$ and 
is bounded by a constant by assumption. The error in $\vec{c}$
is then $\vec{S}\delta\vec{d}$, where the scaling factor associated with
$c_{jk}$ is $\Theta(1/h^{j+k})$. Therefore, the coefficient $c_{jk}$ is 
approximated to $O(h^{d-j-k+1})$.
\end{proof}

Note that our weights given in (\ref{eq:weights}) appear to 
depend on the mesh resolution. However, we can rescale it by
multiplying $h^{d/2}$ to make it $\Theta(1)$ 
without changing the solution, so  
Theorem~\ref{thm:coeffs} applies to our weighting scheme. 
Under the assumptions of Theorem~\ref{thm:coeffs},
degree-$d$ fitting approximates the gradient to $O(h^{d})$ and the
Hessian to $O(h^{d-1})$ at the origin of the local frame,
respectively.  Using the gradient and Hessian estimated from our
polynomial fitting, the estimated differential quantities have the
following property.

\begin{thm}
\label{thm:convergence}Given the position, gradient, and Hessian
of a height function that are approximated to $O(h^{d+1})$, $O(h^{d})$
and $O(h^{d-1})$, respectively. a) The angle between the computed
and exact normals is $O(h^{d})$; b) the components of the shape
operator and curvature tensor are approximated $O(h^{d-1})$ by (\ref{eq:shapeop_orth})
and (\ref{eq:curvtens_l}); c) the Gaussian and mean curvatures are
approximated to $O(h^{d-1})$ by (\ref{eq:curvatures_stable}).
\end{thm}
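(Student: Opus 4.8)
The plan is to treat each claim as a forward error-propagation problem. The inputs are the gradient $\nabla f$ and Hessian $\vec{H}$, whose errors are $O(h^{d})$ and $O(h^{d-1})$ respectively (the former is the hypothesis directly, and both are the specializations $j+k=1$ and $j+k=2$ of Theorem~\ref{thm:coeffs}); the outputs are the normal, shape operator, curvature tensor, and the mean and Gaussian curvatures. The unifying observation is that every formula derived in Section~\ref{sec:continuous-surf}---namely (\ref{eq:normal}), (\ref{eq:shapeop_orth}), (\ref{eq:curvtens_l}), and (\ref{eq:curvatures_stable})---expresses its output as an explicit function of $(\nabla f,\vec{H})$ in which $\ell=\sqrt{1+f_{u}^{2}+f_{v}^{2}}$ enters only through reciprocals. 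Since $\ell\ge 1$ and $\nabla f$ is bounded (the surface is smooth and the frame is chosen so the $uv$-plane is near the tangent plane), each formula is Lipschitz on the relevant compact domain with constants independent of $h$. A first-order perturbation estimate $\|\delta(\textrm{output})\|\lesssim\|\partial_{\nabla f}(\cdot)\|\,O(h^{d})+\|\partial_{\vec{H}}(\cdot)\|\,O(h^{d-1})$ then yields each bound, with the input carrying the larger error dominating.

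For part (a), I would note that the normal (\ref{eq:normal}) depends only on $\nabla f$, so its perturbation is bounded by a constant times the gradient error, giving $\|\delta\hat{\vec{n}}\|=O(h^{d})$. Since the computed and exact normals are both unit vectors, the angle $\phi$ between them satisfies $2\sin(\phi/2)=\|\delta\hat{\vec{n}}\|$, so $\phi=O(h^{d})$, which is claim (a).

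For parts (b) and (c), I would apply the same estimate to the remaining formulas, all of which depend smoothly on $\vec{H}$---linearly for the curvature tensor (\ref{eq:curvtens_l}), the shape operator (\ref{eq:shapeop_orth}), and the mean curvature, and quadratically (through $\det\vec{H}$) for the Gaussian curvature (\ref{eq:curvatures_stable})---with coefficients that are smooth functions of $\nabla f$. Because $\vec{H}$ is bounded, the first derivative with respect to the entries of $\vec{H}$ is bounded in every case, so the $O(h^{d-1})$ error in $\vec{H}$ dominates: it contributes $O(h^{d-1})$, whereas the $O(h^{d})$ error in $\nabla f$ (entering through $\ell$ and through the smooth coefficients) contributes only $O(h^{d})$. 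This yields the claimed $O(h^{d-1})$ for the components of the curvature tensor---where $\vec{J}^{+}$ in (\ref{eq:jacobi_inv}) is manifestly rational in $\nabla f$ with denominator $\ell^{2}\ge 1$, hence globally smooth---and likewise $O(h^{d-1})$ for both curvatures.

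The one place needing care---and what I expect to be the main obstacle---is the shape operator (\ref{eq:shapeop_orth}): the transformation matrix built from $c=f_{u}/\|\nabla f\|$ and $s=f_{v}/\|\nabla f\|$ has bounded entries but is \emph{not} continuous at $\nabla f=\vec{0}$, where $\theta$ was fixed to $0$ by convention, so its individual components need not be Lipschitz there. Away from this degeneracy the argument applies verbatim and gives $O(h^{d-1})$. To cover the degenerate configuration I would fall back on the relation $\tilde{\vec{W}}=\vec{U}^{T}\vec{C}\vec{U}$ from Lemma~\ref{lem:shapop_orth}: the two eigenvalues of $\tilde{\vec{W}}$ are exactly the nonzero eigenvalues of the curvature tensor $\vec{C}$, which is globally smooth and already controlled to $O(h^{d-1})$ in part (b); thus the principal curvatures---the frame-independent content of $\tilde{\vec{W}}$---inherit the bound regardless of the frame ambiguity. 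Establishing that the Lipschitz constants are genuinely $h$-independent, i.e.\ that $\nabla f$ stays bounded and $\ell$ stays in a fixed interval as the mesh refines, is the only nonroutine ingredient; once that is secured, the remaining estimates are mechanical.
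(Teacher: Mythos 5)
Your proposal follows essentially the same route as the paper's own proof: forward error propagation through the explicit formulas, with the $O(h^{d-1})$ Hessian error dominating and denominators harmless since $\ell\geq 1$; your part (a) is the paper's argument almost verbatim (the paper relates norm to angle via $\Vert\tilde{\vec{n}}-\hat{\vec{n}}\Vert_{2}^{2}=2-2\tilde{\vec{n}}^{T}\hat{\vec{n}}=\theta^{2}+O(\theta^{4})$ where you use $2\sin(\phi/2)=\Vert\delta\hat{\vec{n}}\Vert$). The one place you diverge is precisely the spot you flagged, and there you are more careful than the paper. The paper's proof of (b) splits into ``$f_{u}=f_{v}=0$, whence $\tilde{\vec{W}}=\vec{H}$'' and ``otherwise $\ell$, $c$, $s$ are approximated to $O(h^{d})$''; this case analysis has a gap, because when $\Vert\nabla f\Vert$ is nonzero but comparable to $h^{d}$ the errors in $c=f_{u}/\Vert\nabla f\Vert$ and $s=f_{v}/\Vert\nabla f\Vert$ can be $O(1)$, and when the exact gradient vanishes the computed $c$, $s$ encode the arbitrary direction of the $O(h^{d})$ computed gradient, so the computed $\tilde{\vec{W}}$ is approximately $\vec{R}^{T}\tilde{\vec{H}}\vec{R}$ for an uncontrolled rotation $\vec{R}$. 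Your fallback through $\tilde{\vec{W}}=\vec{U}^{T}\vec{C}\vec{U}$ and the globally rational, hence Lipschitz, tensor $\vec{C}$ is the right repair, with one caveat: it salvages only the frame-independent content (the principal curvatures, and hence part (c)), whereas the literal component-wise claim for $\tilde{\vec{W}}$ in (b) genuinely fails in that degenerate regime --- take exact $\nabla f=\vec{0}$ and $\vec{H}$ not a multiple of the identity; then the components of $\vec{R}(\phi)^{T}\tilde{\vec{H}}\vec{R}(\phi)$ differ from those of $\vec{H}$ by $O(1)$ for a generic direction $\phi$ of the computed gradient. That is a defect of the theorem's phrasing (and of the paper's proof), not of your argument; the curvature-tensor and curvature claims, which are what the rest of the framework actually uses, are fully established by your proposal.
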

\begin{proof}
a) Let $\tilde{f}_{u}$ and $\tilde{f}_{v}$ denote
the estimated derivatives of $f$, which approximate the true derivatives
$f_{u}$ and $f_{v}$ to $O(h^{d})$. Let $\tilde{\ell}$ and $\ell$
denote $\Vert[-\tilde{f}_{u},-\tilde{f}_{v},1]\Vert$ and
$\Vert[-f_{u},-f_{v},1]\Vert$, respectively. Let $\tilde{\vec{n}}$
denote the computed unit normal $[-\tilde{f}_{u},-\tilde{f}_{v},1]^{T}/\tilde{\ell}$
and $\hat{\vec{n}}$ the exact unit normal $[-f_{u},-f_{v},1]^{T}/\ell$.
Therefore, 
\[
\tilde{\vec{n}}-\hat{\vec{n}}=\frac{\ell[-\tilde{f}_{u},-\tilde{f}_{v},1]^{T}-\tilde{\ell}[-f_{u},-f_{v},1]^{T}}{\ell\tilde{\ell}},\]
where the numerator is $O(h^{d})$ and the denominator is $\Theta(1)$.
Let $\theta$ denote $\arccos\tilde{\vec{n}}^{T}\hat{\vec{n}}$. Because $\Vert\tilde{\vec{n}}-\hat{\vec{n}}\Vert_{2}^{2}=2-2\tilde{\vec{n}}^{T}\hat{\vec{n}}=\theta^{2}+O(\theta^{4})$,
it then follows that $\theta$ is $O(h^{d})$.

b) In (\ref{eq:shapeop_orth}), if $f_{u}=f_{v}=0$, then $\tilde{\vec{W}}=\vec{H}$,
which is approximated to $O(h^{d-1})$. Otherwise, $\vec{H}$ is approximated
to $O(h^{d-1})$ while $\ell$, $c$, and $s$ are approximated to
$O(h^{d})$, so the error in $\tilde{\vec{W}}$ is $O(h^{d-1})$.
Similarly, in equations (\ref{eq:curvtens_l}) and (\ref{eq:curvtens_g}),
the dominating error term is the $O(h^{d-1})$ error in $\vec{H}$, while
$\vec{J}^{+}$ and $\tilde{\vec{J}}^{+}$ are approximated to $O(h^{d})$
by their explicit formulas, so $\vec{C}$ and $\vec{C}_{g}$ are approximated
to $O(h^{d-1})$.

c) In (\ref{eq:curvatures_stable}), $\ell$ and $\nabla f$ are approximated
to $O(h^{d})$ and $\vec{H}$ is approximated to $O(h^{d-1})$. Therefore,
$\kappa_{G}\ $ and $\kappa_{H}$ are both approximated to $O(h^{d-1})$.
\end{proof}
The above analysis did not consider iterative fitting. Following the
same argument, if the vertex positions and normals are both
approximated to $O(h^{d+1)})$ and the scaled coefficient matrix has
full rank, then the coefficients $a_{jk}$ and $b_{jk}$ are
approximated to order $O(h^{d-j-k+1)})$ by our iterative fitting.  The
error in the Hessian would then be $O(h^{d})$, so are the estimated
curvatures. Therefore, iterative fitting is potentially advantageous,
given accurate normals.  Note that none of our analyses requires any
symmetry of the input data points to achieve convergence. For
even-degree fittings, the leading term in the remainder of the Taylor
series is odd degree. If the input points are perfectly
symmetric, then the residual would also exhibit some degree of symmetry,
and the leading-order error term may cancel out as in the
centered-finite-difference scheme. Therefore, superconvergence may be
expected for even-degree polynomial fittings, and iterative fitting
may not be able to further improve their accuracies. Regarding 
to the principal directions, they are inherently unstable at the points where
the maximum and minimum curvatures have similar magnitude. However, if
the magnitudes of the principal curvatures are well separated, then
the principal directions would also have similar convergence rates as
the curvatures.

\section{Experimental Results}
\label{sec:Experimental-Study}

In this section, we present some experimental results of our
framework.  We focus on the demonstration of accuracy and stability
as well as the advantages of iterative fitting, the
weighting scheme, and the safeguarded numerical solver. We do not
attempt a thorough comparison with other methods; readers are referred
to \cite{GG06ECT} for such a comparison of earlier methods. We
primarily compare our method against the baseline fitting methods in
\cite{CP05EDQ,MW00SNG}, and assess them for both closed and open
surfaces.

\subsection{Experiments with Closed Surfaces}

We first consider two simple closed surfaces: a sphere with unit radius,
and a torus with inner radius $0.7$ and outer radius $1.3$. We generated
the meshes using GAMBIT, a commercial software from Fluent Inc. Our
focuses here are the convergence rates with and without iterative
fitting as well as the effects of the weighting scheme. For convergence
test, we generated four meshes for each surface independently of each
other by setting the desired edge lengths to $0.1,$ $0.05$, $0.025$,
and $0.0125$, respectively. Figure~\ref{fig:closed-surface} shows
two meshes that are coarser but have similar unstructured connectivities
as our test meshes.

\begin{figure}[t!]
\begin{centering}
\includegraphics[width=0.49\columnwidth]{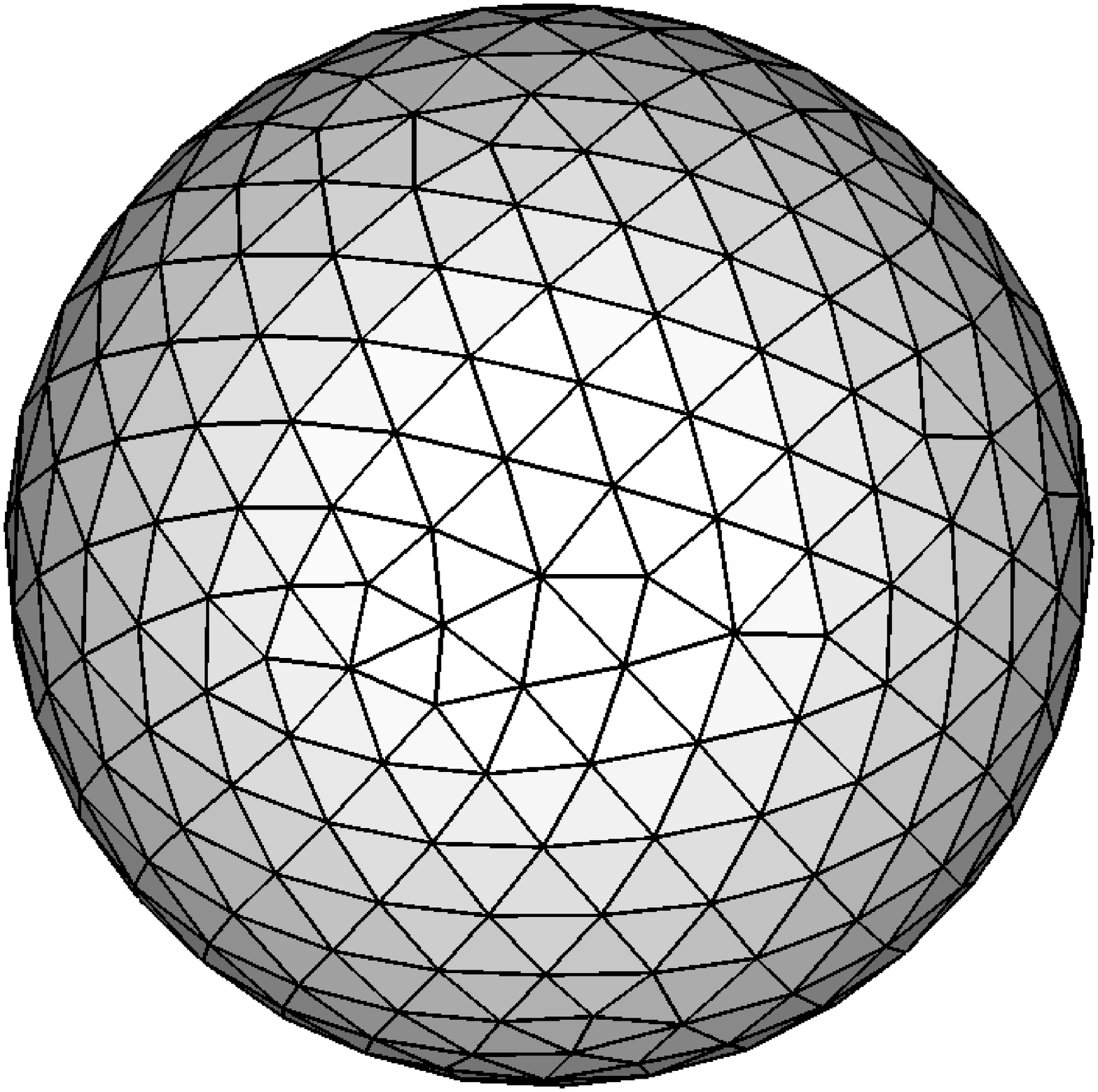}\hfill\includegraphics[width=0.49\columnwidth]{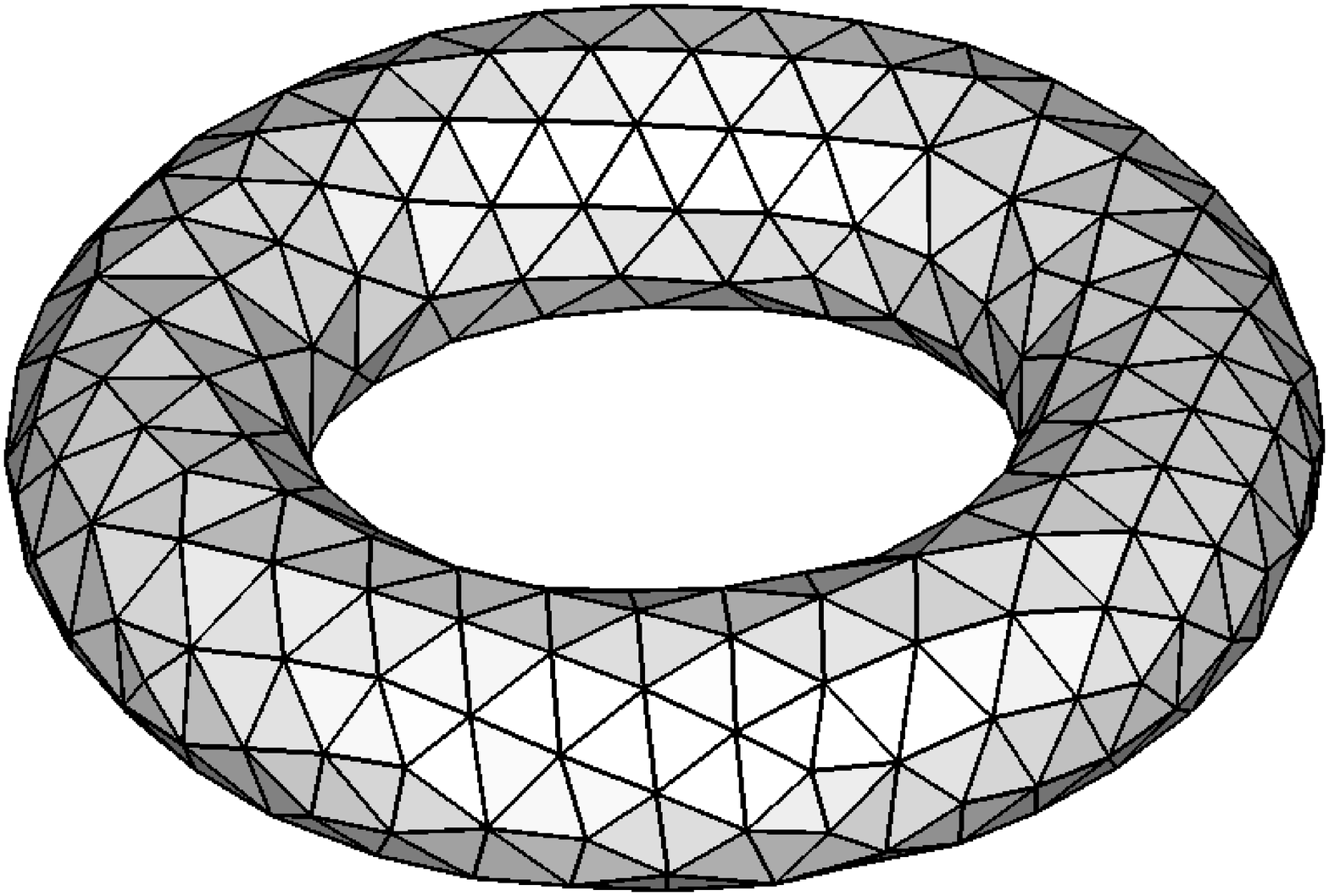}
\par\end{centering}

\caption{\label{fig:closed-surface}Sample unstructured meshes of sphere and torus.}

\end{figure}

We first assess the computations of normals using fittings of degrees
between one and six. Figure~\ref{fig:nrmerrs} shows the errors in
the computed normals versus the ``mesh refinement level.''
We label the plots by the degrees
of fittings. Let $v$ denote the total number of vertices, and let
$\hat{\vec{n}}_{i}$ and $\tilde{\vec{n}}_{i}$ denote the exact and 
computed unit vertex normals at the $i$th vertex. We measure the relative
$L_{2}$ errors in normals as\[
\sqrt{\frac{1}{v}\sum_{1}^{v}\Vert\tilde{\vec{n}}_{i}-\hat{\vec{n}}_{i}\Vert_{2}^{2}}.\]
We compute the convergence rates as \[
\mbox{convergence rate}=\frac{1}{3}
\log_{2}\left(\frac{\mbox{error of level 1}}{\mbox{error of level 4}}\right),\]
and show them at the right ends of the curves. In our tests, the
convergence rates for normals were equal to or higher than the degrees
of fittings. For spheres, the convergence rates of even-degree fittings
were about one order higher than predicted, likely due to nearly perfect
symmetry and error cancellation.

\begin{figure}[tbh]
\begin{centering}
\includegraphics[width=0.49\columnwidth]{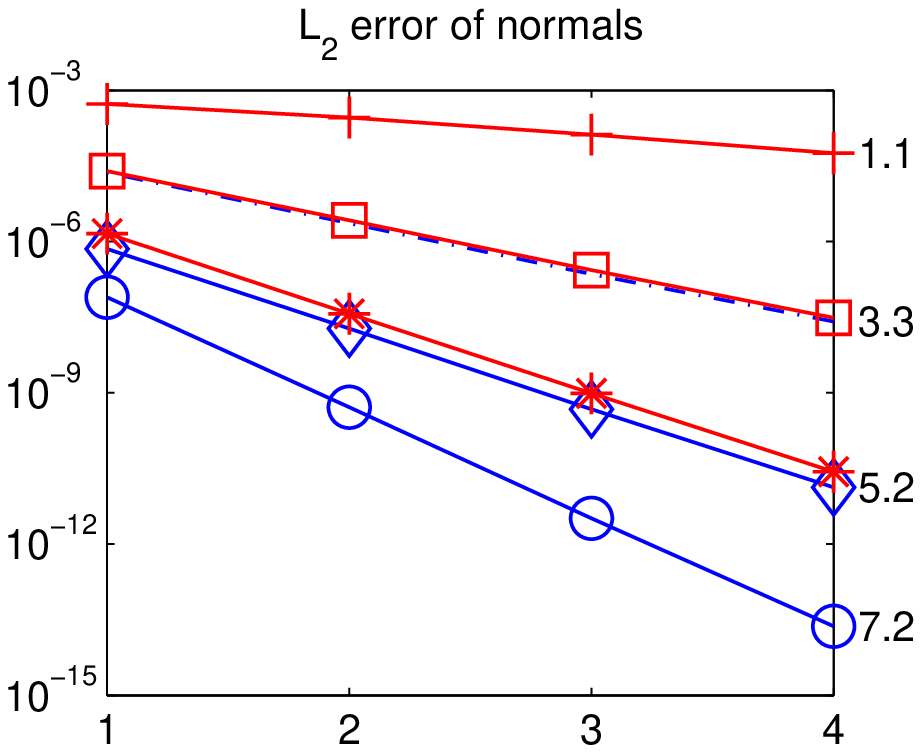}\hfill\includegraphics[width=0.49\columnwidth]{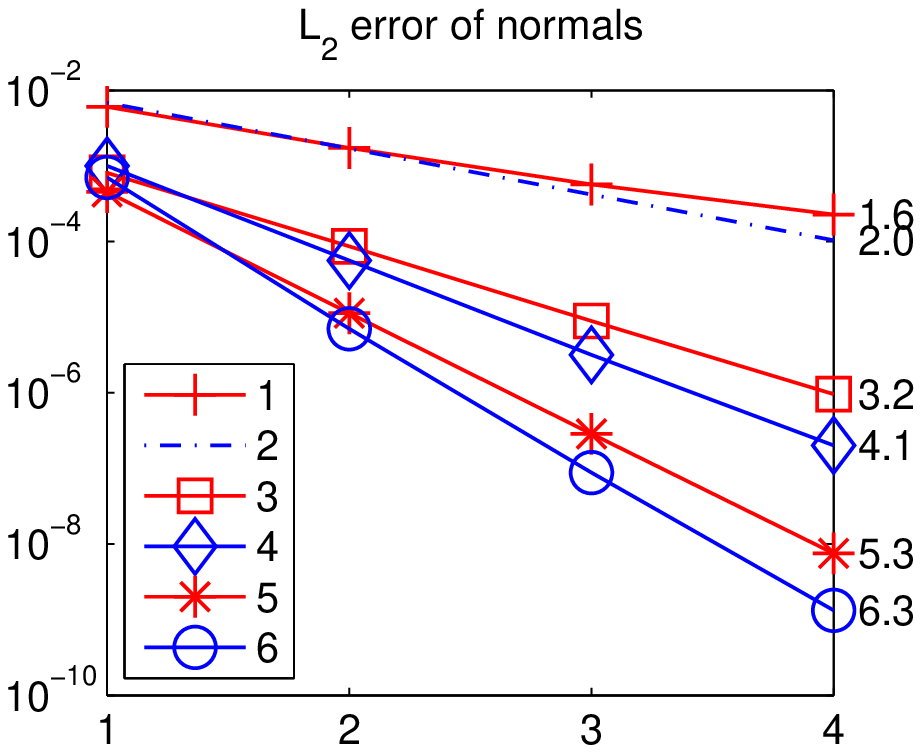}
\par\end{centering}

\caption{\label{fig:nrmerrs}$L_2$ errors in computed normals for 
sphere (left) and torus (right). }

\end{figure}

Second, we consider the computations of curvatures. It would be excessive
to show all the combinations, so we only
show some representative results. Figure~\ref{fig:sphere} shows
the errors in the minimum and maximum curvatures for the sphere. Figure~\ref{fig:torus}
shows the errors in the mean and Gaussian curvatures for the torus. In
the labels, `$+$' indicates the use of iterative fitting.
Let $k_i$ and $\tilde{k}_i$ denote the exact and computed 
quantities at the $i$th vertex, we measure the relative errors in $L_{2}$ norm as
\begin{equation}
\frac{\Vert\tilde{\kappa}-\kappa\Vert_{2}}{\Vert\kappa\Vert_{2}}\equiv
\left.
\sqrt{\sum_{i=1}^{v}\left(\tilde{\kappa}_{i}-\kappa_{i}\right)^{2}}\right/\sqrt{\sum_{i=1}^{v}\kappa_{i}^{2}}.\label{eq:l2err}\end{equation}
Let $d$ denote the degree of a fitting. In these tests, the convergence
rates were $d-1$ or higher as predicted by theory. In addition,
even-degree fittings converged up to one order faster due to error
cancellation. The converge rates for odd-degree polynomials were
about $d-1$ but were also boosted to approximately $d$ when iterative
fitting is used. Therefore, iterative fitting is effective in 
improving odd-degree fittings. In our experiments,
iterative fitting did not improve even-degree fittings.

\begin{figure}[tbh]
\begin{centering}
\includegraphics[width=0.49\columnwidth]{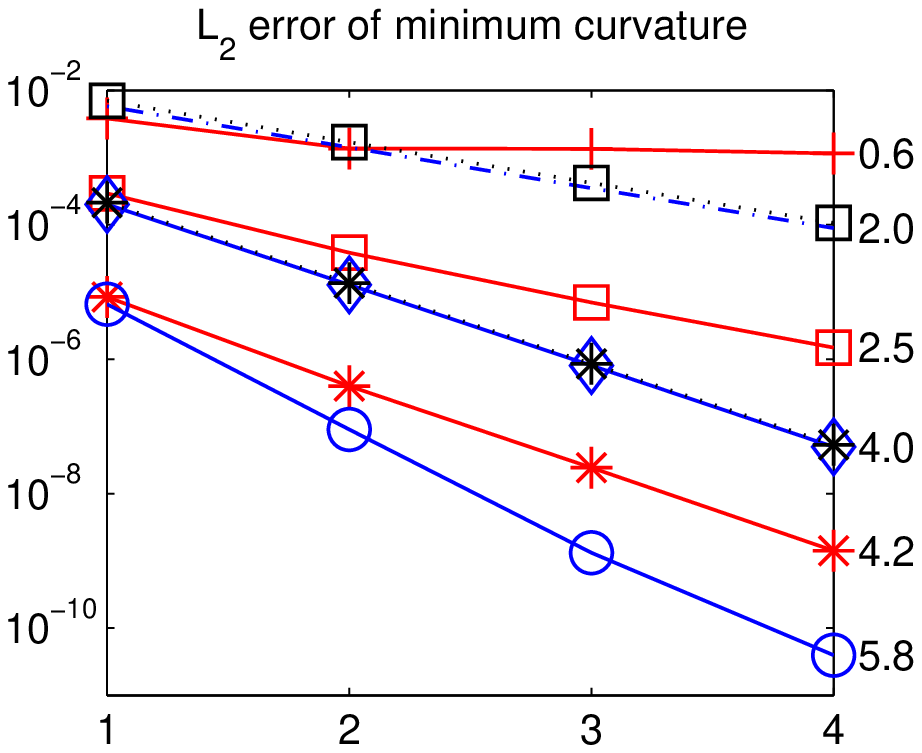}\hfill\includegraphics[width=0.49\columnwidth]{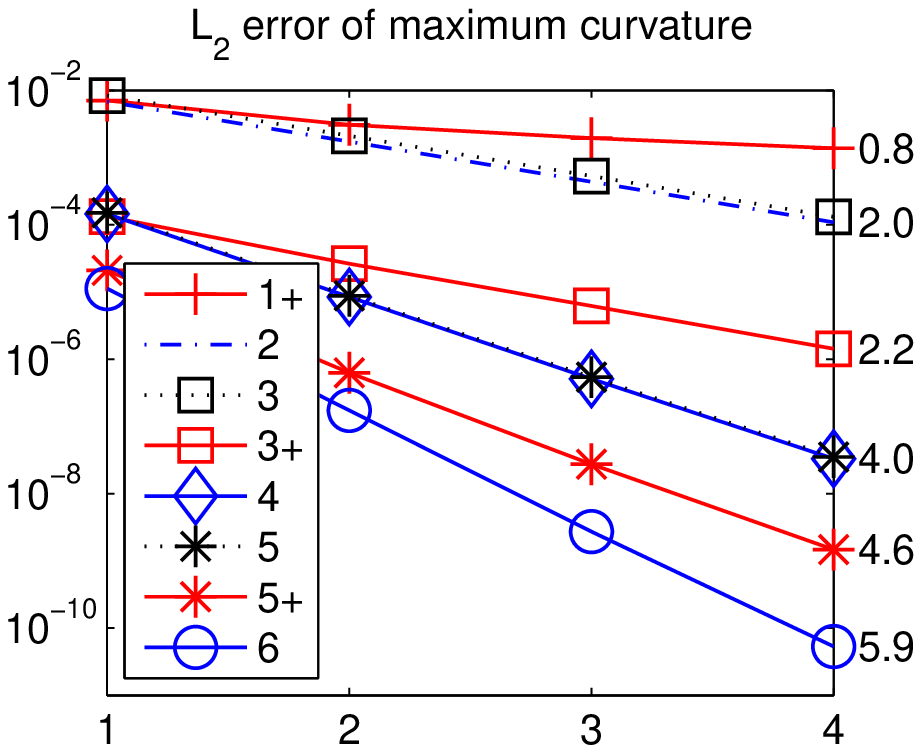}
\par\end{centering}

\caption{\label{fig:sphere}$L_2$ errors in computed minimum and maximum 
curvatures for sphere.}

\end{figure}

\begin{figure}[tbh]
\begin{centering}
\includegraphics[width=0.49\columnwidth]{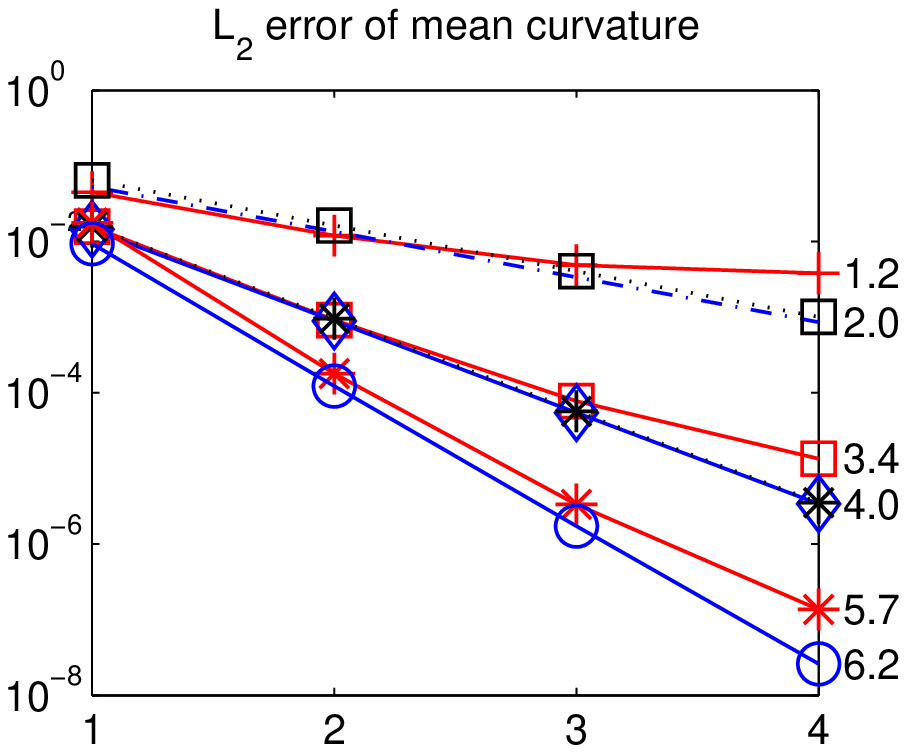}\hfill\includegraphics[width=0.49\columnwidth]{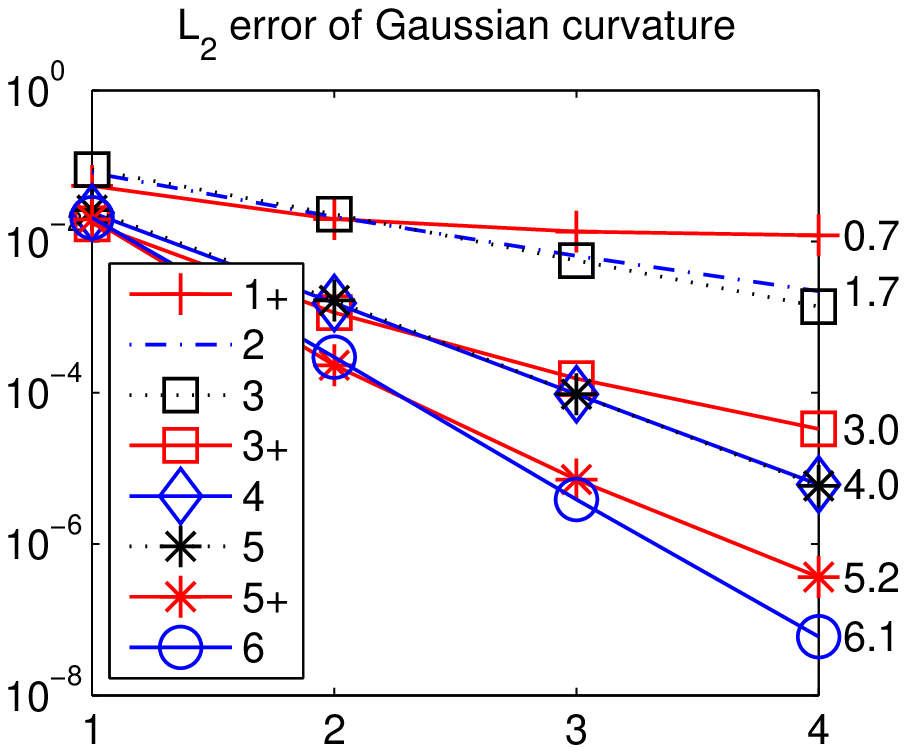}\caption{\label{fig:torus}$L_2$ errors in computed mean 
and Gaussian curvatures for torus.}

\par\end{centering}
\end{figure}

The preceding computations used the weighting scheme described in
Section~\ref{sub:local_poly}, which tries to balance conditioning
and error cancellation. This weighting scheme improved the results
in virtually all of our tests. Figure~\ref{fig:noweighting-torus} shows a representative
comparison with and without weighting (as labeled by ``$\cdot$nw''
and ``$\cdot$w'', respectively) for the maximum curvatures of
the sphere and torus.

\begin{figure}[tbh]
\begin{centering}
\includegraphics[width=0.49\columnwidth]{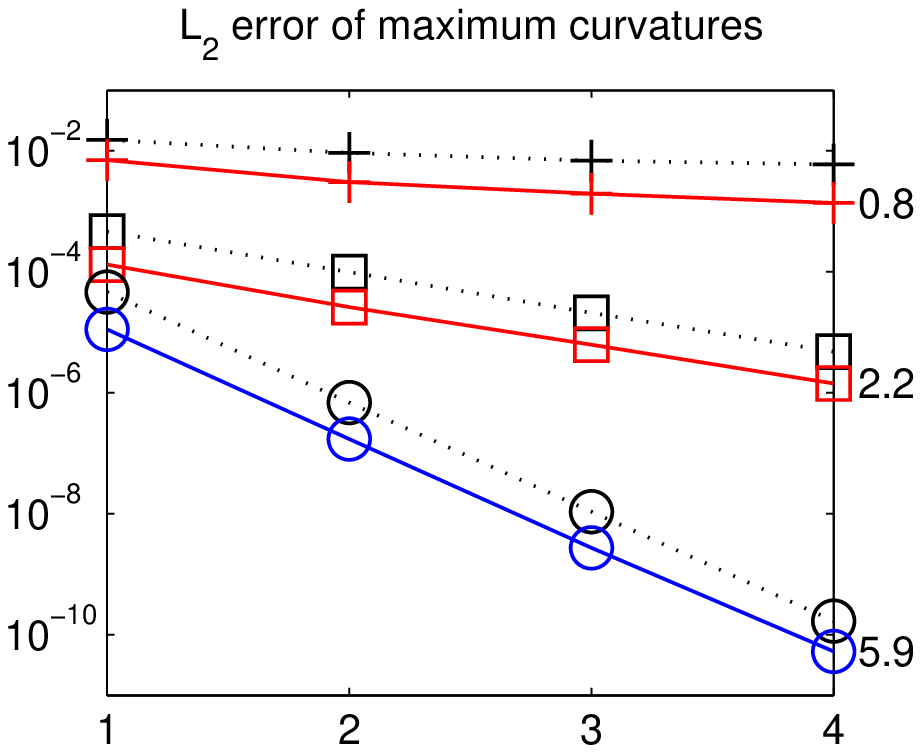}\hfill\includegraphics[width=0.49\columnwidth]{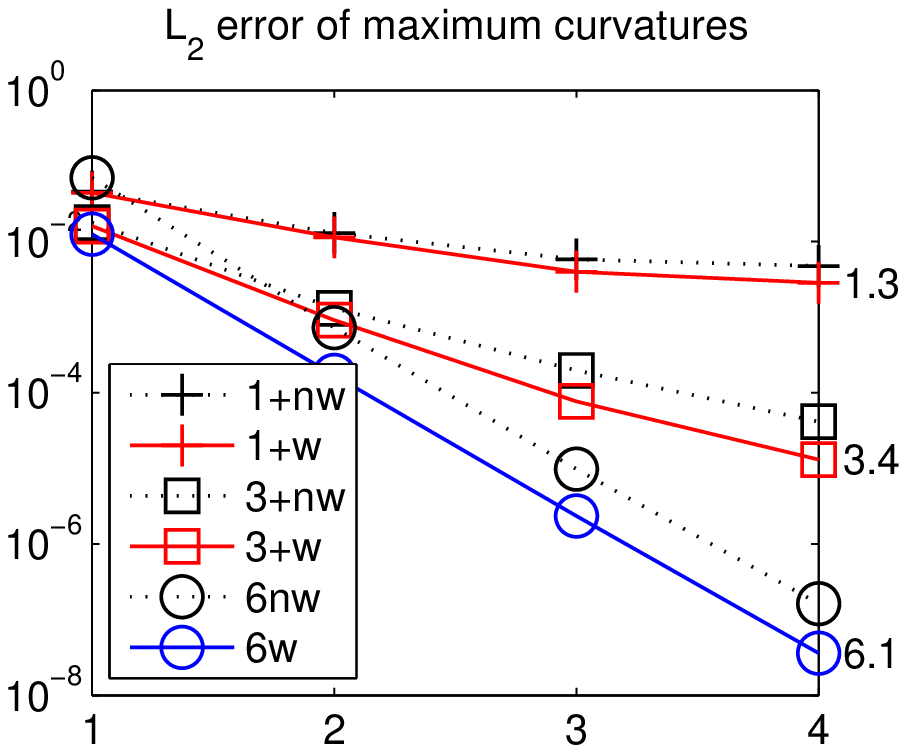}
\par\end{centering}

\caption{\label{fig:noweighting-torus}Comparisons of curvature computations
with and without weighting for sphere (left) and torus (right).}

\end{figure}

\subsection{Experiments with Open Surfaces}

We now consider open surfaces, i.e. surfaces with boundary. We focus
on the study of stabilities and the effects of boundary and irregular
connectivities. We use two surfaces defined by the following functions
adopted from \cite{Xu04CDL}: \begin{align}
z = F_{1}(x,y) = & \frac{1.25+\cos(5.4y)}{6+6(3x-1)^{2}},\label{eq:f1}\\
z = F_{2}(x,y) = & \exp\left(-\frac{81}{16}\left((x-0.5)^{2}+(y-0.5)^{2}\right)\right),\label{eq:f2}\end{align}
where $(x,y)\in[0,1]\times[0,1]$. Figure~\ref{fig:Test-cases}(a-b)
shows these surfaces, color-coded by the mean curvatures. We use two
types of meshes, including irregular and semi-regular meshes (see
Figure~\ref{fig:Test-cases}(c-d)). For convergence study, we refine
the irregular meshes using the standard one-to-four subdivision \cite[p. 283]{Gal00CSG}
and refine the semi-regular meshes by replicating the pattern. We computed
the ``exact'' differential quantities using the formulas in Section~\ref{sec:continuous-surf}
in the global coordinate system, but performed all other computations
in local coordinate systems. For rigorousness of the tests, we consider both $L_{2}$
and $L_{\infty}$ errors. In addition, border vertices are included in
all the error measures, posting additional challenges to the tests.
Note that the results for vertices far away from the boundary would be
qualitatively similar to those of closed surfaces. We primarily consider 
fittings of up to degree four, since higher convergence rates may 
require larger neighborhoods for border vertices (more than $3.5$-ring
neighbors). To limit the length of presentation, we report only some
representative results to cover the aforementioned different aspects.

\begin{figure}[tbh]
\begin{centering}
\subfigure[$F_1$.]{\includegraphics[width=0.49\columnwidth]{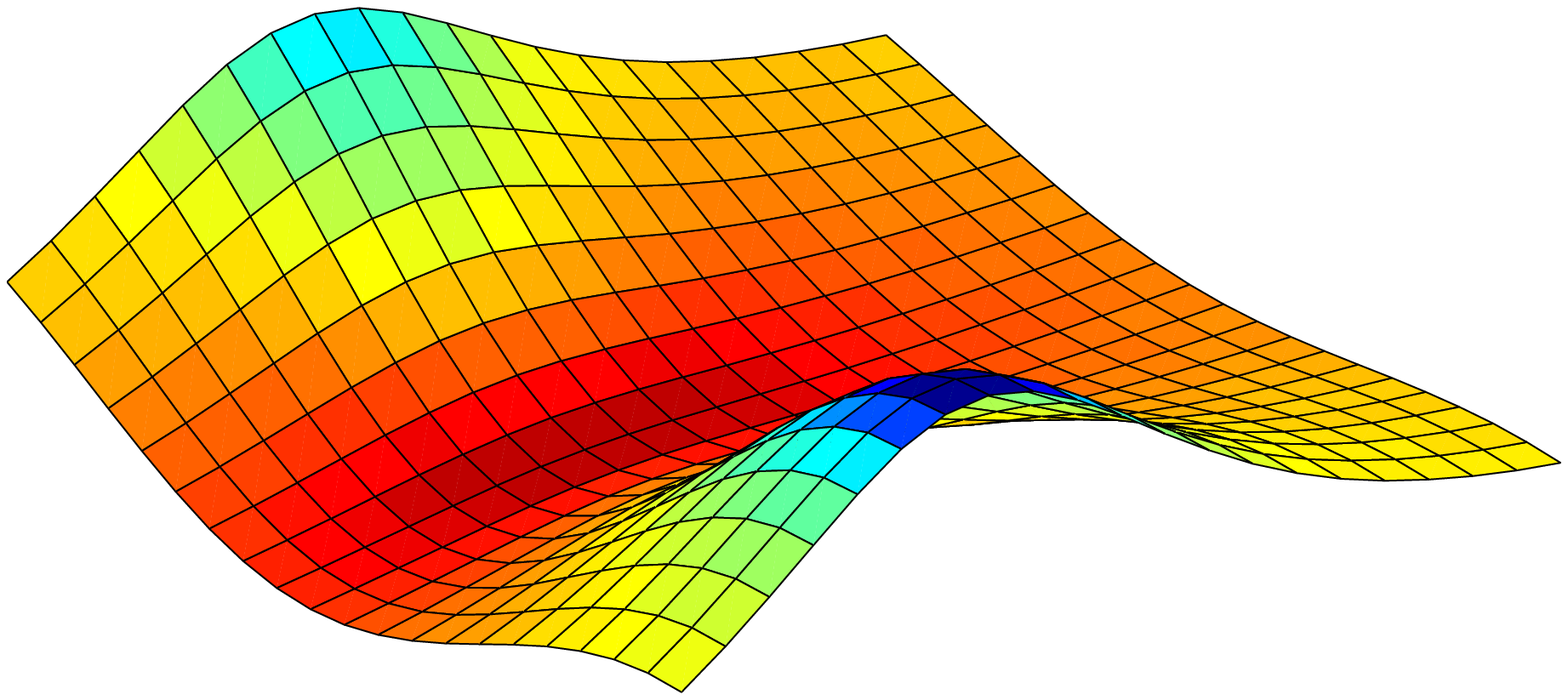}}
\hfill\subfigure[$F_2$.]{\includegraphics[width=0.49\columnwidth]{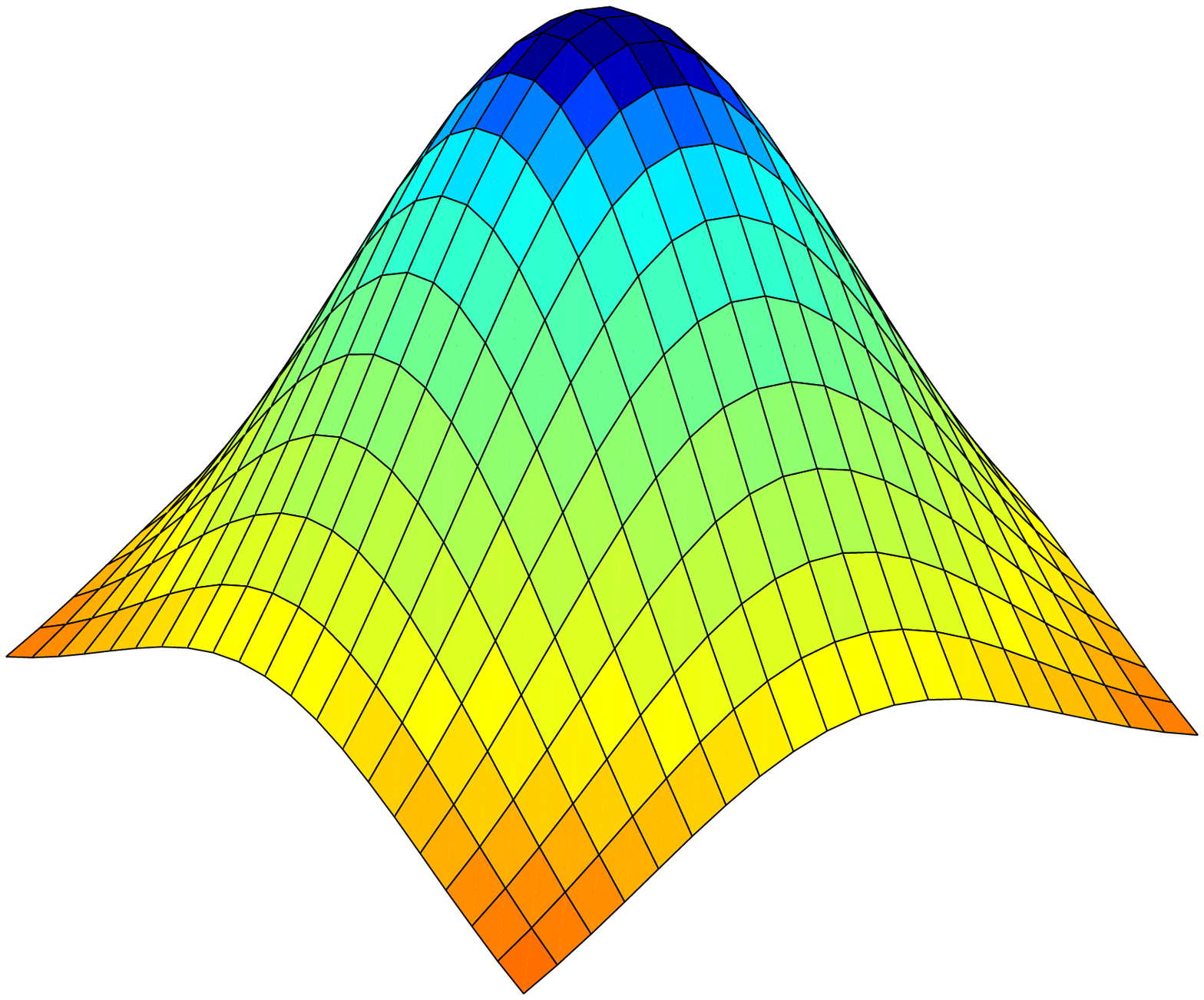}}\\
\subfigure[Irregular mesh.]{\includegraphics[width=0.49\columnwidth]{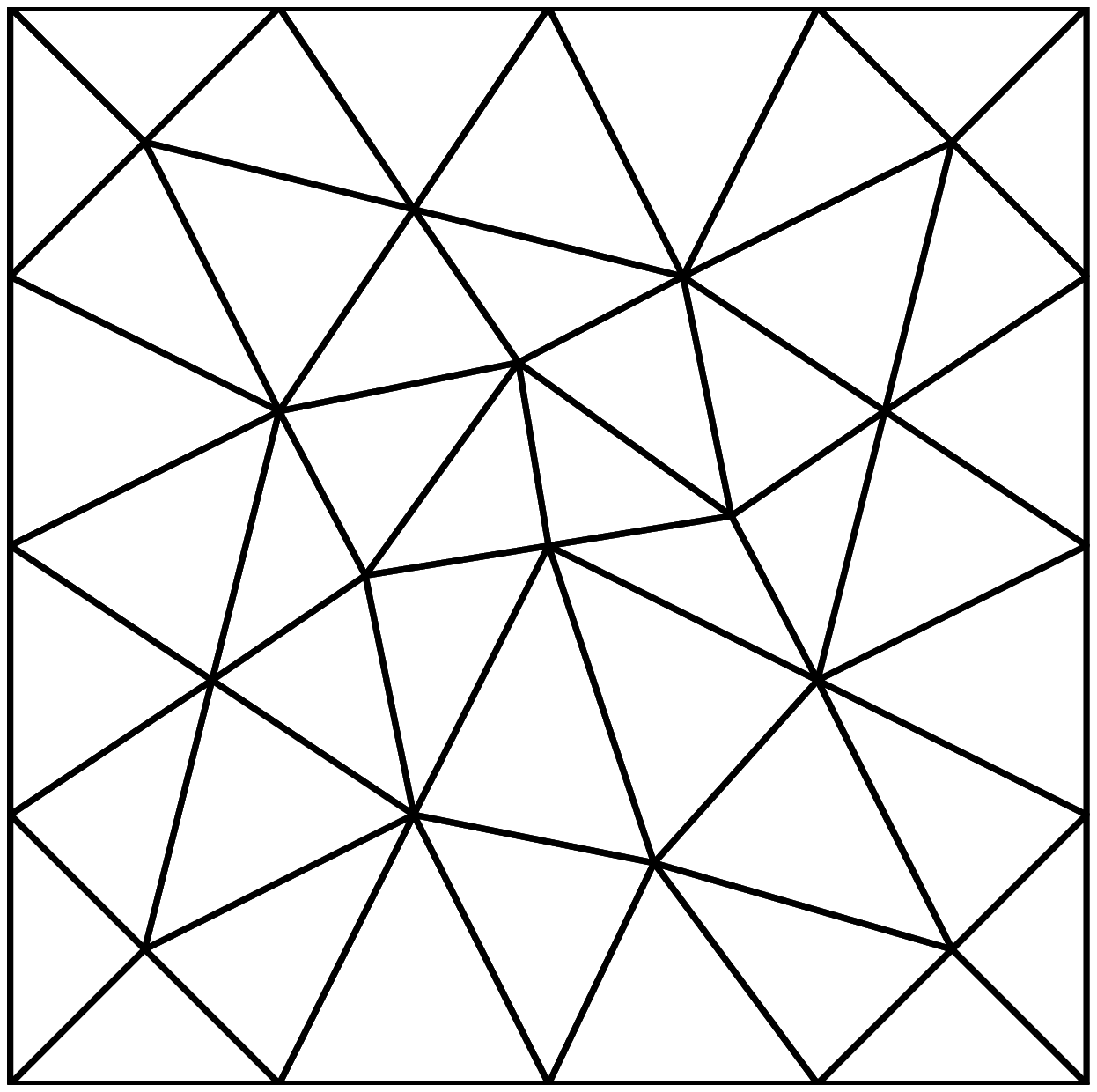}}\hfill
\subfigure[Semi-regular mesh.]{\includegraphics[width=0.49\columnwidth]{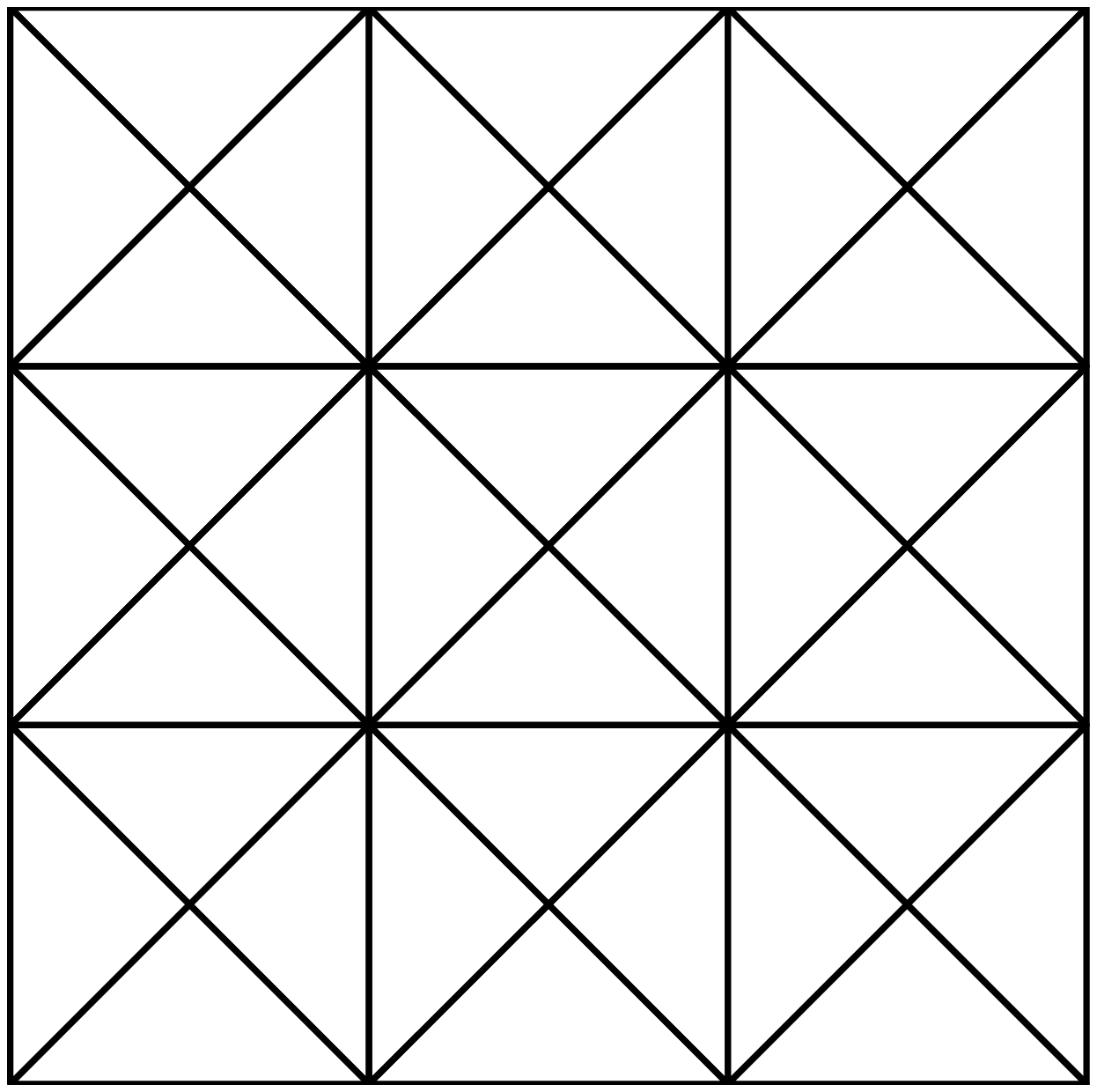}} 
\par\end{centering}

\caption{\label{fig:Test-cases}Test surfaces (a-b) and meshes (c-d). Surfaces
are color-coded by mean curvatures.}

\end{figure}

We first assess the errors in the computed normals for open surfaces.
Figure~\ref{fig:f1_nrm} shows the results for $F_{1}$ over irregular
meshes. We label all the plots and convergence rates in the same way
as for closed surfaces. Let $d$ denote the degree of a fitting.  All
these fittings delivered convergence rate of $d$ or higher in $L_{2}$
errors.  In $L_{\infty}$ errors, computed as
$\max_{i}\Vert\tilde{\vec{n}}_{i}-\hat{\vec{n}}_{i}\Vert_{2}$, the
convergence rates were $d-0.5$ or higher, close to theoretical predictions.

\begin{figure}[tbh]
\begin{centering}
\includegraphics[width=0.49\columnwidth]{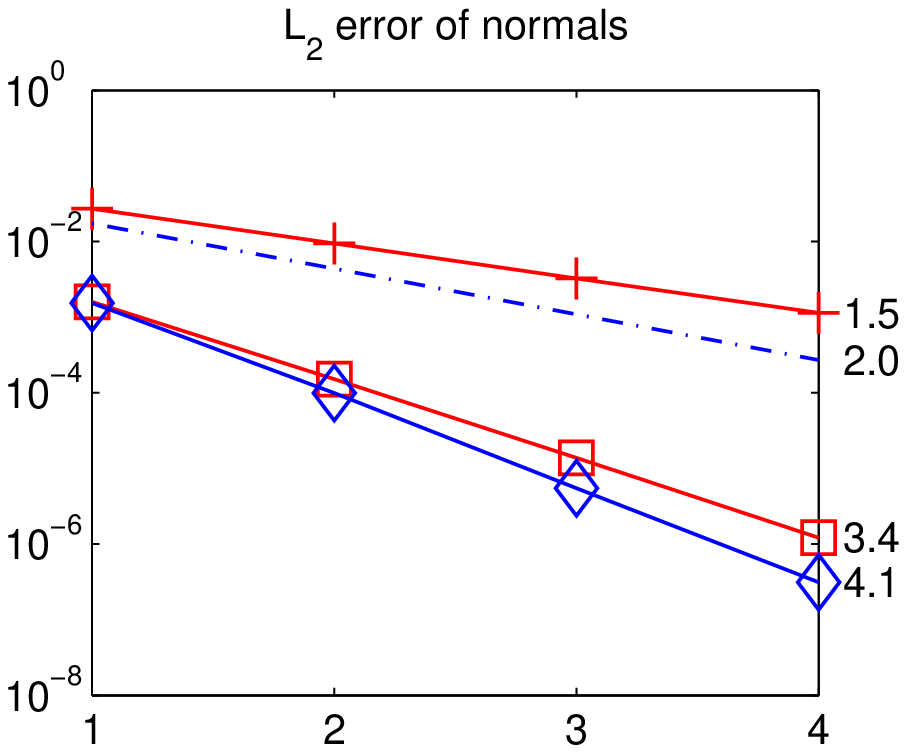}\includegraphics[width=0.49\columnwidth]{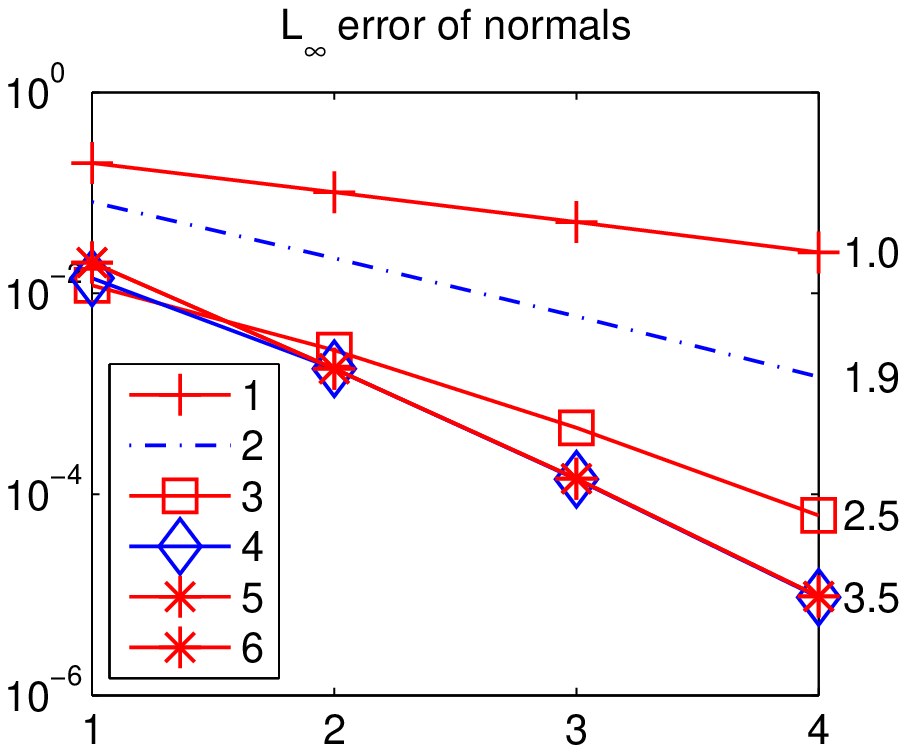}
\par\end{centering}

\caption{\label{fig:f1_nrm}$L_2$ (left) and $L_\infty$ (right) errors in computed normals for $F_{1}$ over irregular meshes.}

\end{figure}

Second, we assess the errors in the curvatures. Figure~\ref{fig:f1_ir}
shows the errors in mean curvatures for $F_{1}$ over irregular meshes,
and Figure~\ref{fig:f2_48} shows the errors in Gaussian curvatures
for $F_{2}$ over semi-regular meshes. Let $\kappa$ and $\tilde{\kappa}$
denote the exact and computed quantities. We computed the $L_{2}$
error using (\ref{eq:l2err}) and computed the $L_{\infty}$ errors
as \begin{equation}
\max_{i}\left|\tilde{\kappa}_{i}-\kappa_{i}\right|/\max\{\left|\kappa_{i}\right|,\epsilon\},\label{eq:linferr}\end{equation}
where $\epsilon=0.01\max_{i}\left|\kappa_{i}\right|$ was introduced
to avoid division by too small numbers. The convergence rates for
curvatures were approximately equal to $d-1$ or higher for 
even-degree fittings and odd-degree iterative fittings.

\begin{figure}[tbh]
\begin{centering}
\includegraphics[width=0.49\columnwidth]{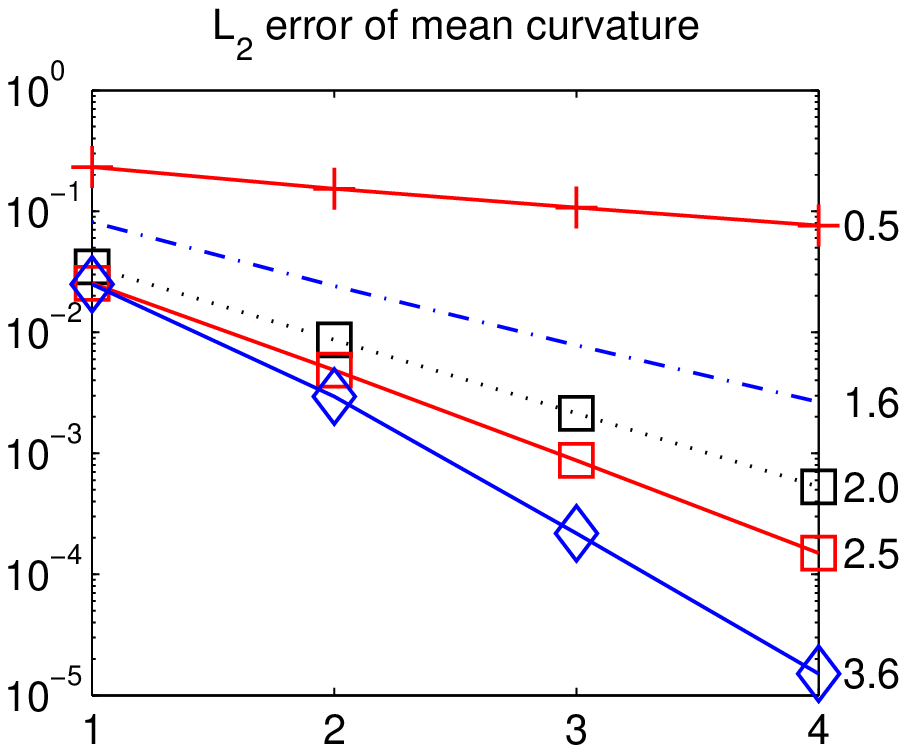}\includegraphics[width=0.49\columnwidth]{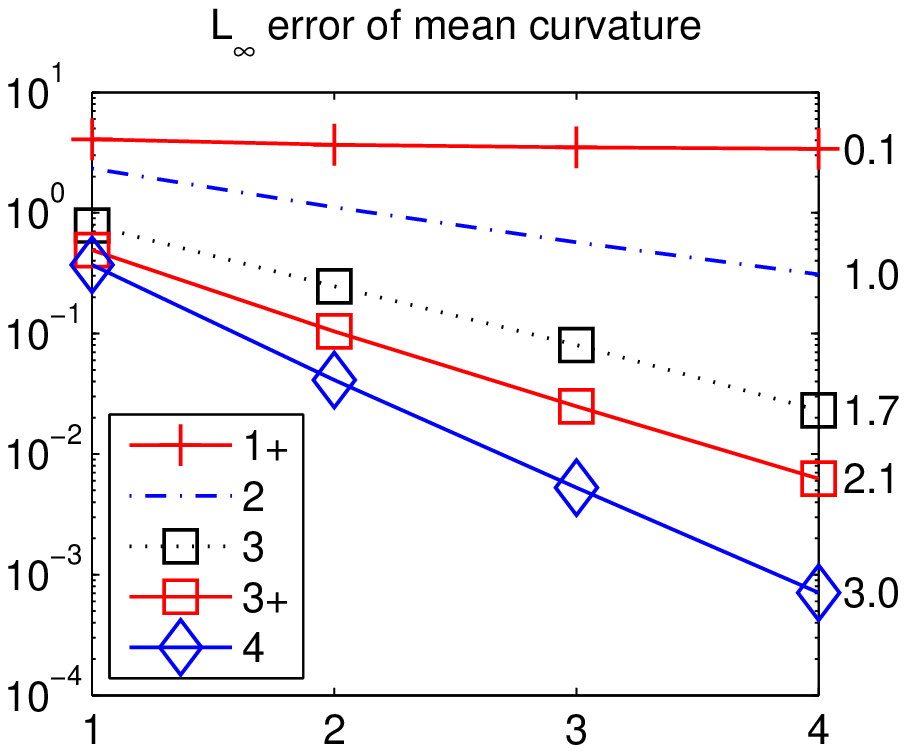}
\par\end{centering}

\caption{\label{fig:f1_ir}$L_2$ (left) and $L_\infty$ (right) in computed mean curvatures for $F_{1}$
over irregular meshes.}

\end{figure}

\begin{figure}[tbh]
\begin{centering}
\includegraphics[width=0.49\columnwidth]{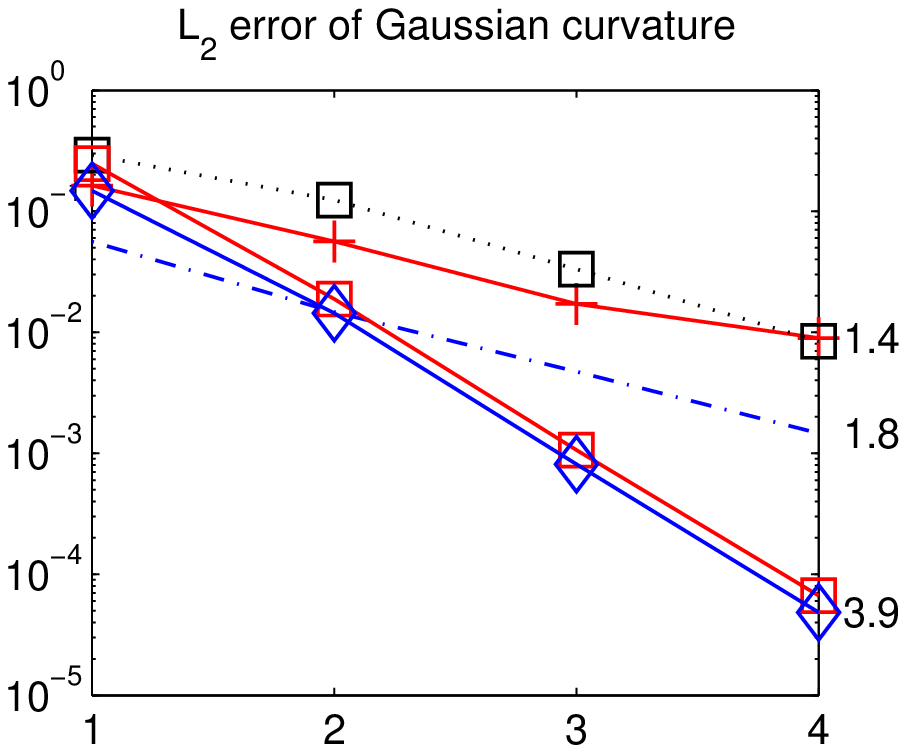}\includegraphics[width=0.49\columnwidth]{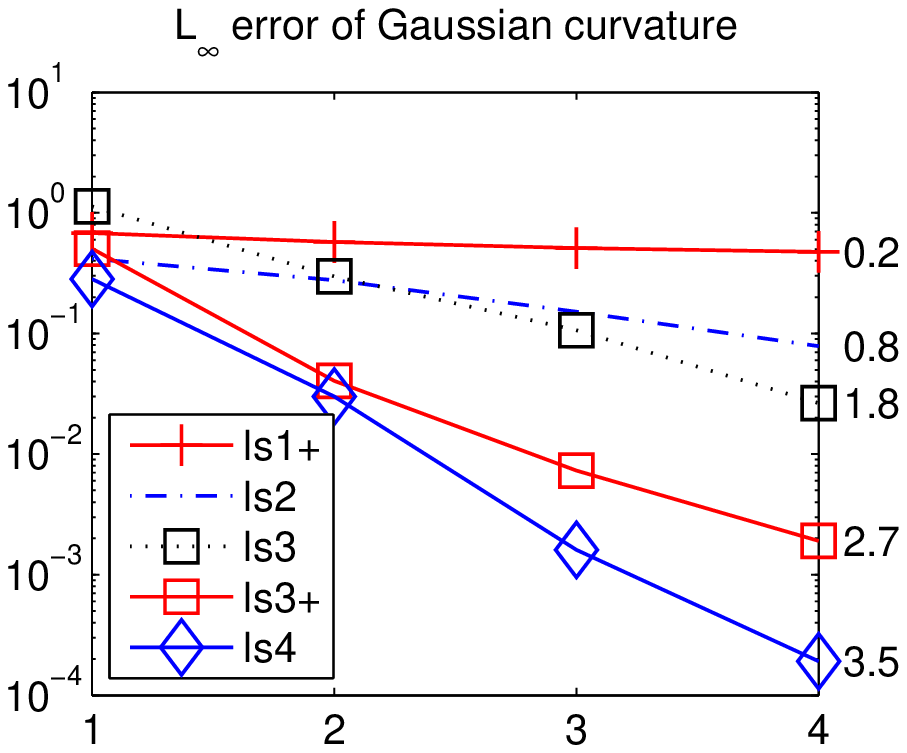}
\par\end{centering}

\caption{\label{fig:f2_48}$L_2$ (left) and $L_\infty$ (right) in computed Gaussian curvatures for $F_{2}$
over semi-regular meshes.}

\end{figure}

As noted earlier, the principal directions are inherently unstable
when the principal curvatures are roughly equal to each other (such as
at umbilic points). In Figure~\ref{fig:f1_pdr}, we show the $L_2$ and
$L_\infty$ errors of principal directions for $F_{1}$ over
semi-regular meshes, where the errors are measured similarly as for
normals. This surface is free of umbilic points, and the principal
directions converged at comparable rates as curvatures for iterative
cubic fitting and quartic fitting.

\begin{figure}[tbh]
\begin{centering}
\includegraphics[width=0.49\columnwidth]{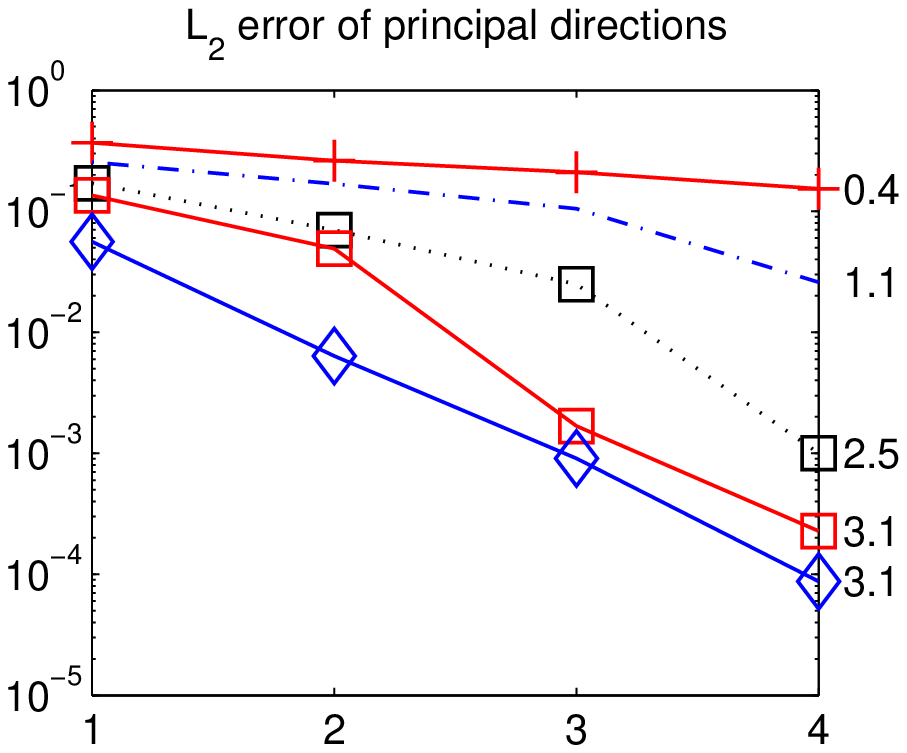}\includegraphics[width=0.49\columnwidth]{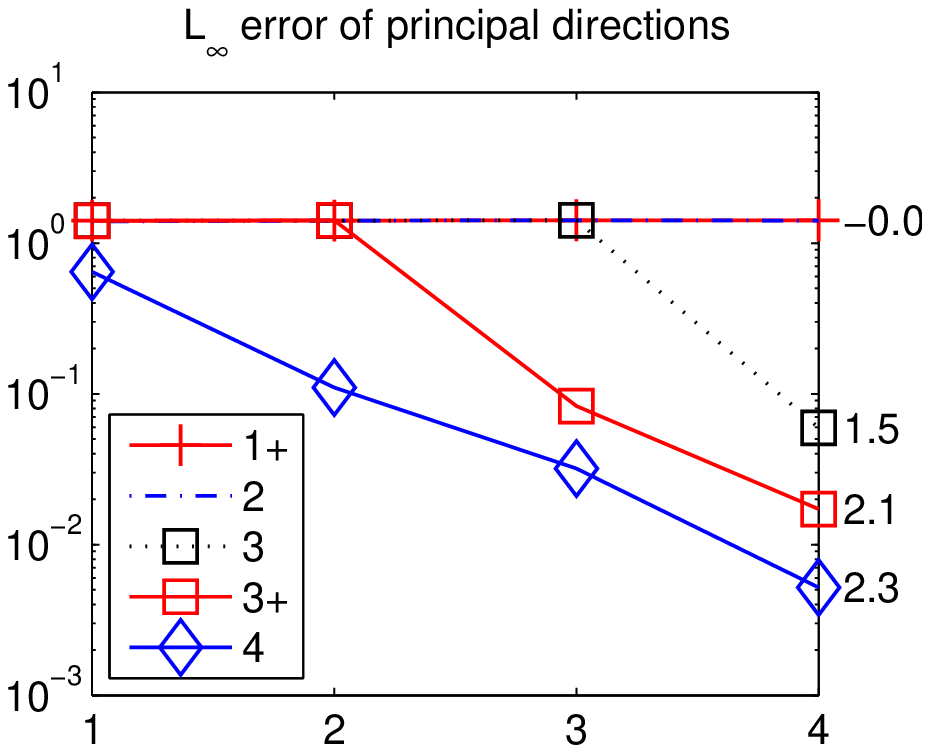}
\par\end{centering}

\caption{\label{fig:f1_pdr}$L_2$ (left) and $L_\infty$ (right) in computed principal directions for $F_{1}$
over semi-regular meshes.}

\end{figure}

Finally, to demonstrate the importance and effectiveness of our
conditioning procedure, Figure~\ref{fig:cond_comp}
shows comparison of computed normals and mean curvatures with and
without conditioning (as labeled by ``$\cdot$nc'' and ``$\cdot$c'',
respectively) for fittings of degrees three, five, and six.  Here, the
conditioning refers to requiring number of points to be $1.5$ or more times
of the number of unknowns as well as the checking of condition
numbers. Without conditioning, the results exhibited large errors for
normals and catastrophic failures for curvatures, due to numerical
instabilities.  With conditioning, our framework is stable for all the
tests, although it did not achieve the optimal convergence rate for
the sixth-degree fittings due to too small neighborhoods for the vertices
near boundary.

\begin{figure}[tbh]
\begin{centering}
\includegraphics[width=0.49\columnwidth]{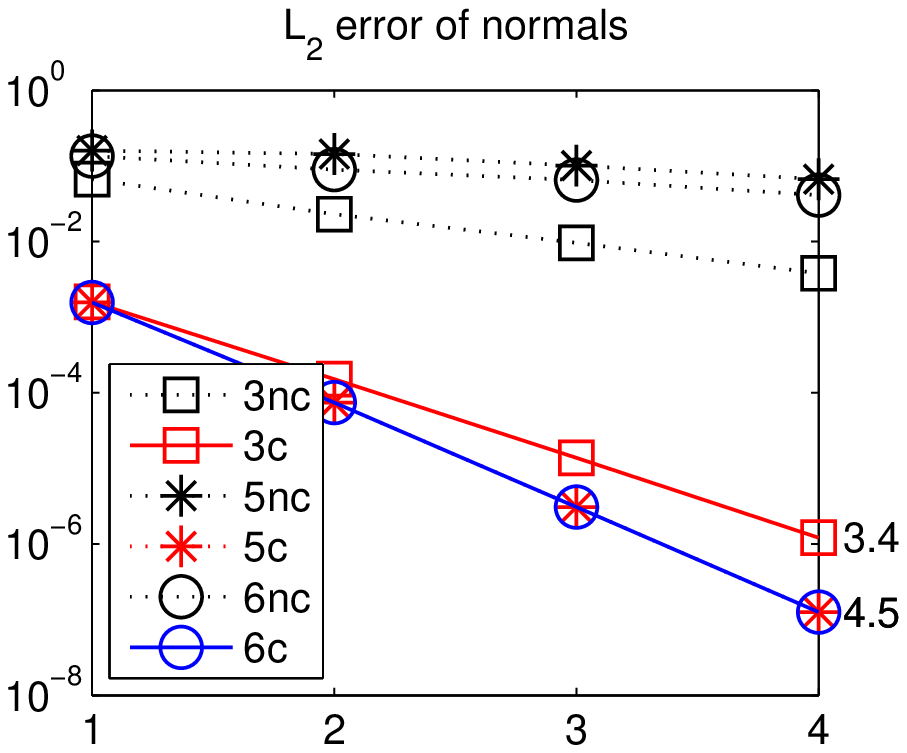}\includegraphics[width=0.49\columnwidth]{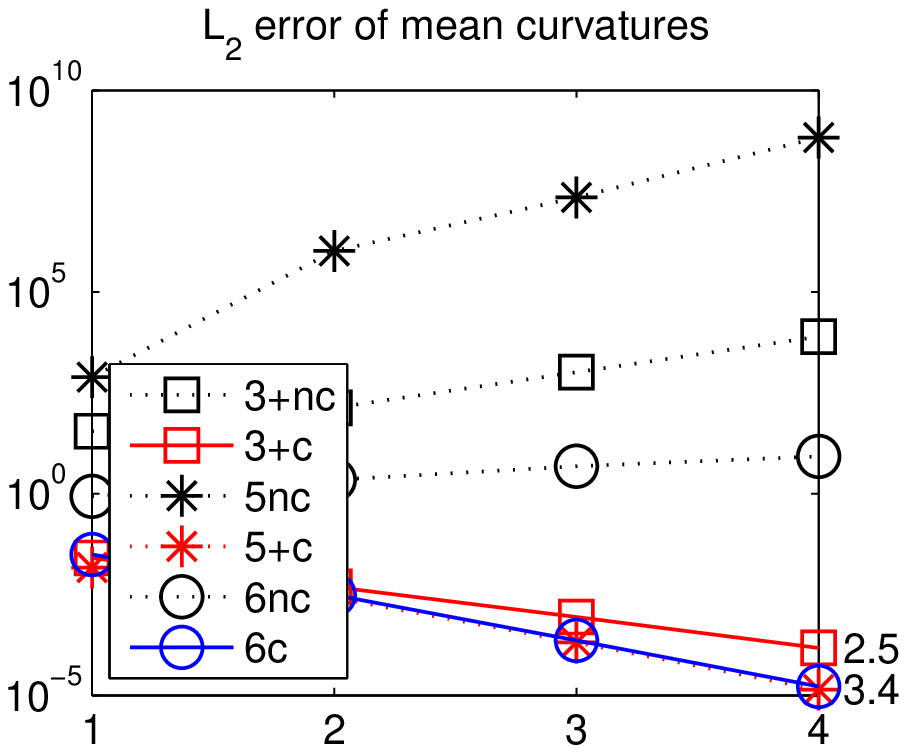}
\par\end{centering}

\caption{\label{fig:cond_comp}Comparisons of errors in computed 
normals (left) and mean curvatures (right) with and without conditioning 
for $F_{1}$ over irregular meshes.}

\end{figure}

\section{Discussions}

\label{sec:Conclusion}

In this paper, we presented a computational framework for computing the
first- and second-order differential quantities of a surface
mesh. This framework is based on the computation of the first- and
second-order derivatives of a height function, which are then
transformed into differential quantities of a surface in a simple and
consistent manner. We proposed an iterative fitting method to compute
the derivatives of the height function starting from the points with
or without the surface normals, solved by weighted least squares
approximations. We improve the numerical stability by a systematic
point-selection strategy and QR factorization with safeguard. By
achieving both accuracy and stability, our method delivered converging
estimations of the derivatives of the height function and in turn the
differential quantities of the surfaces.


The main focus of this paper has been on the
consistent and converging computations of differential quantities. We
did not address the robustness issues for input surface meshes with
large noise and singularities (such as sharp ridges and corners).
We have conducted some preliminary comparisons with other
methods, which we will report elsewhere. One of the major motivating
application of this work is provably accurate and stable solutions of
geometric flows for geometric modeling and physics-based
simulations. We are currently investigating the stability of our
proposed methods for such problems. Another future direction is to
generalize our method to compute higher-order differential quantities.

\section*{Acknowledgements}

This work was supported by the National Science Foundation under award
number DMS-0713736 and in part by a subcontract from the Center for
Simulation of Advanced Rockets of the University of Illinois at
Urbana-Champaign funded by the U.S. Department of Energy through the
University of California under subcontract B523819. We thank anonymous
referees for their helpful comments.

\bibliographystyle{acmsiggraph}
\bibliography{diffops}

\end{document}